\definecolor{darkgreen}{rgb}{0.0, 0.2, 0.13}
\definecolor{darkpastelgreen}{rgb}{0.01, 0.75, 0.24}
\DeclareMathOperator*{\argmax}{arg\,max}
\DeclareMathOperator*{\sgn}{sgn}
\DeclareMathOperator*{\erf}{erf}
\DeclareMathOperator*{\erfc}{erfc}
\DeclareMathOperator*{\cov}{\mathbb{C}ov}
\DeclareMathOperator*{\var}{\mathbb{V}ar}
\newtheorem{proposition}{Proposition}
\newtheorem{lemma}{Lemma}
\newtheorem{corollary}{Corollary}
\newcommand{\R}{\mathbb{R}}
\newcommand{\N}{\mathbb{N}}
\newcommand{\p}{\mathbb{P}}
\newcommand{\e}{\mathbb{E}}
\newcommand{\ep}{\varepsilon}
\newcommand{\eqd}{\overset{\rm d}{=}}
\newcommand{\vk}[1]{\boldsymbol{#1}}
\newcommand{\ap}{{c_+}}
\newcommand{\am}{{c_-}}
\newcommand{\apm}{{\vk c}}
\newcommand{\kummer}{{}_1F_1}
\renewcommand{\hat}{\widehat}
\def\env@dmatrix{\hskip -\arraycolsep
  \let\@ifnextchar\new@ifnextchar
  \extrarowheight=2ex
  \array{*\c@MaxMatrixCols{>{\displaystyle}c}}}
\renewcommand{\tilde}{\widetilde}
\date{\today}
\begin{document}

\title[Lower bound for the expected supremum of fBm]{Lower bound for the expected supremum of fractional Brownian motion using coupling}

\author[K.\ Bisewski]{Krzysztof Bisewski}
\address{Department of Actuarial Science, University of Lausanne, UNIL-Dorigny, 1015 Lausanne, Switzerland}
\email{Krzysztof.Bisewski@unil.ch}

\keywords{fractional Brownian motion; expected value of supremum; expected workload; lower bound}
\subjclass[2020]{60G22, 60G15, 68M20}

\begin{abstract}
We derive a new theoretical lower bound for the expected supremum of drifted fractional Brownian motion with Hurst index $H\in(0,1)$ over (in)finite time horizon. Extensive simulation experiments indicate that our lower bound outperforms the Monte Carlo estimates based on very dense grids for $H\in(0,\tfrac{1}{2})$. Additionally, we derive the Paley-Wiener-Zygmund representation of a Linear Fractional Brownian motion and give an explicit expression for the derivative of the expected supremum at $H=\tfrac{1}{2}$ in the sense of recent work by Bisewski, D{\c e}bicki \& Rolski (2021).
\end{abstract}

\maketitle


\section{Introduction}

Let $\{B_H(t),t\in\R_+\}$, where $\R_+:=[0,\infty)$ be fractional Brownian motion with Hurst index $H\in(0,1)$ (or $H$-fBm), that is, a centred Gaussian process with the covariance function
\begin{equation}\label{def:fbm_covariance}
\cov(B_H(t),B_H(s)) = \frac{1}{2}(s^{2H}+t^{2H}-|t-s|^{2H}), \quad s,t\in\R_+.
\end{equation}
In this manuscript we consider the expected supremum of fractional Brownian motion with drift $a\in\R$ over time horizon $T>0$, that is
\begin{align*}
\mathscr M_H(T,a) := \e\Big(\sup_{t\in[0,T]} B_H(t) - at\Big).
\end{align*}
Even though the quantity $\mathscr M_H(T,a)$ is so fundamental in the theory of extremes of fractional Brownian motion, its value is known explicitly only in two cases: $H=\tfrac{1}{2}$ and $H=1$, when $B_H$ is a standard Brownian motion and a straight line with normally distributed slope, respectively. For general $H\in(0,1)$, the value of $\mathscr M_H(T,a)$ could, in principle, be approximated using Monte Carlo methods by simulation of an fBm on a dense grid, i.e.
\begin{align*}
\mathscr M^n_H(T,a) := \e\left(\sup_{t\in\mathcal T_n} \{B_H(t) - at\}\right),
\end{align*}
where $\mathcal T_n := \{0,\tfrac{T}{n},\tfrac{2T}{n},\ldots,T\}$. However, this approach can lead to substantial errors. In \cite[Theorem~3.1]{borovkov2017bounds} it was proven that the absolute error $\mathscr M_H(T,0)-\mathscr M^n_H(T,0)$ behaves roughly (up to logarithmic terms) like $n^{-H}$, as $n\to\infty$. This becomes problematic, as $H\downarrow0$, when additionally $\mathscr M_H(T,0)\to\infty$; see also \cite{MR3519723}. Similarly, the error is expected to be large, when $T$ is large, since in that case more and more points are needed to cover the interval $[0,T]$. Surprisingly, even as $H\uparrow 1$ one may also encounter problems because then $M_H(\infty,a) \to\infty$ for all $a>0$, see \cite{MR4018824, BDM21}. Since the estimation of $\mathscr M_H(T,a)$ is so challenging, many works are dedicated to finding its theoretical upper and lower bounds. The most up-to-date bounds for $\mathscr M_H(T,0)$ can be found in \cite{borovkov2017bounds, borovkov2018new}, see also \cite{Sha96, vardar2015bounds} for older results. The most up-to-date bounds for $\mathscr M_H(\infty,a)$ can be found in \cite{BDM21}.

In this work we present a new theoretical lower bound for $\mathscr M_H(T,a)$ for general $T>0$, $a\in\R$ (including the case $T=\infty$, $a>0$). Our approach is loosely based on recent work \cite{bisewski2021derivatives}, where the authors consider a coupling between $H$-fBms with different values of $H\in(0,1)$ on the Mandelbrot \& van Ness field \cite{mandelbrot1968fractional}. The idea of considering such a coupling dates back at least to \cite{peltier1995multifractional, benassi1997elliptic}, who introduced a so-called multi-fractional Brownian motion. In this manuscript, we consider a coupling provided by the family of Linear Fractional Stable motions with $\alpha=2$, see \cite[Chapter~7.4]{samorodnitsky1994stable} which we will call the linear fractional Brownian motion. Conceptually, our bound for $\mathscr M_H(T,a)$ is very simple --- it is defined as the expected value of the $H$-fBm at the time of maximum of the corresponding $\tfrac{1}{2}$-fBm (i.e. Brownian motion). The difficult part is the actual calculation of this expected value. This is described in detail in Section \ref{s:main_results}. Our new lower bound, which we denote $\underline{\mathscr M}_H(T,a)$, is introduced in Corollary~\ref{coro:MH}.

Our numerical experiments show that $\underline{\mathscr M}_H(T,a)$ performs exceptionally well in the subdiffusive regime $H\in(0,\tfrac{1}{2})$. In fact, the numerical simulations indicate that $\underline{\mathscr M}_H(T,a)$ gives a \emph{better approximation} to the ground truth that the Monte Carlo estimates with as many as $2^{16}$ gridpoints, i.e.
\begin{equation*}
\underline{\mathscr M}_H(T,a) \geq \mathscr M^n_H(T,a), \quad n=2^{16}
\end{equation*}
for all $H\in(0,\tfrac{1}{2})$. We emphasize that $\underline{\mathscr M}_H(T,a)$ is the theoretical \emph{lower} bound for $\mathscr M_H(T,a)$, which makes the result above even more surprising.

The manuscript is organized as follows. In Section~\ref{s:preliminaries} we define the Linear Fractional Brownian motion and establish its Paley-Wiener-Zygmund representation. We also recall the formula for the joint density of the supremum of drifted Brownian motion and its time and introduce a certain functional of the 3-dimensional Bessel bridge, which plays an important role in this manuscript. In Section \ref{s:main_results} we show our main results; our lower bound $\underline{\mathscr M}_H(T,a)$ is presented in Corollary \ref{coro:MH}. Additionally, in Corollary \ref{coro:derivative} we present an explicit formula for the derivative $\frac{\partial}{\partial H} \mathscr M_{H}(T,a)\vert_{H=1/2}$, which was given in terms of definite integral in \cite{bisewski2021derivatives}. The main results are compared to numerical simulations in Section \ref{s:numerical_experiments}, where the results are also discussed. The proofs of main results are given in Section \ref{s:proofs}. In Appendix~\ref{appendix:special_functions} we recall the definition and properties of confluent hypergeometric functions. Finally, in Appendix~\ref{appendix:calculations} we enclosed various calculations needed in the proofs.


\section{Preliminaries}\label{s:preliminaries}

\subsection{Linear Fractional Brownian motion}

In this section we introduce the definition of the Linear Fractional Brownian motion and establish its Paley-Wiener-Zygmund representation.

Let $\{B(t):t\in\R\}$ be a standard two-sided Brownian motion. For $(H,t)\in(0,1)\times\R_+$ let
\begin{equation}\label{def:X_pm}
\begin{split}
X^+_H(t) & := \int_{-\infty}^0 \left[(t-s)^{H-\tfrac{1}{2}} - (-s)^{H-\tfrac{1}{2}}\right]{\rm d}B(s) + \int_0^t(t-s)^{H-\tfrac{1}{2}}{\rm d}B(s), \\
X^-_H(t) & := - \int_0^t s^{H-\tfrac{1}{2}}{\rm d}B(s) - \int_t^\infty \left[s^{H-\tfrac{1}{2}}-(s-t)^{H-\tfrac{1}{2}}\right]{\rm d}B(s).
\end{split}
\end{equation}
Note that in case $H=\tfrac{1}{2}$ we have $X^+_{1/2}(t) = B(t)$, $X^-_{1/2}(t) = -B(t)$. Furthermore, for $\apm :=(\ap,\am)\in\R^2_0$, with $\R^2_0 := \R^2\setminus\{(0,0)\}$ put
\begin{equation}\label{def:X_apm}
X_H^\apm(t) := \ap X_H^+(t) + \am X_H^-(t)
\end{equation}
Finally, for any ${\bf c}\in\R_0^2$, we define the (standardized) Linear Fractional Brownian motion $\{B_H^\apm(t): (H,t)\in(0,1)\times\R_+\}$, where
\begin{equation}\label{def:B}
B^\apm_H(t) := \frac{X^\apm_H(t)}{\sqrt{V^\apm_H}}, \qquad V^\apm_H := \var X^\apm_H(1).
\end{equation}
Now, according to  \cite[Lemma~4.1]{StoevTaqqu06}  we have
\begin{equation}\label{def:V}
V^\apm_H := C^2_H \cdot \left[\left((\ap  + \am )\cos\big(\tfrac{\pi(H +1/2)}{2}\big)\right)^2 + \left((\ap  - \am )\sin\big(\tfrac{\pi(H +1/2)}{2}\big)\right)^2\right],
\end{equation}
where
\begin{equation}\label{def:C}
C^2_H := \frac{\Gamma(\tfrac{1}{2}+H)\Gamma(2-2H)}{2H\Gamma(\tfrac{3}{2}-H)}.
\end{equation}

We emphasize that $B_H(t)$ is Gaussian field with well-known covariance structure, i.e. for each $\apm\in\R^2_0$, the value of
\begin{equation}\label{eq:general_covariance_structure}
\cov(B^\apm_{H_1}(t_1),B^\apm_{H_2}(t_2)), \qquad (H_1,t_1),(H_2,t_2)\in(0,1)
\times\R^2_+
\end{equation}
is known, see \cite[Theorem~4.1]{StoevTaqqu06} (we do not write it here because the formula is quite involved and we don't use it). While for each $H\in(0,1)$, the process $\{B^\apm_H(t):t\in\R_+\}$ is an $H$-fBm (therefore its law is independent of the choice of the pair $\apm$), the covariance structure \eqref{eq:general_covariance_structure} of the entire field varies for different $\apm$; see \cite{StoevTaqqu06}. In orther words, different choices of $\apm$ will provide different couplings between the fBms. 
The case $\apm = (1,0)$ corresponds to the fractional Brownian field introduced by Mandelbrot \& van Ness  in \cite{mandelbrot1968fractional} (note that in this case we have $V^\apm_H = C^2_H$). We remark that the representation \eqref{def:B} was recently rediscovered in \cite{kordzakhia2018limit}.

Following \cite{bisewski2021derivatives}, we use the \textit{Paley-Wiener-Zygmund} (PWZ) representation of processes $\{\tilde X^+_H(t) : (H,t)\in(0,1)\times\R_+\}$ and $\{\tilde X^-_H(t) : (H,t)\in(0,1)\times\R_+\}$ defined in \eqref{def:X_pm}
\begin{equation}\label{eq:Xtilde_pm}
\begin{split}
\tilde X_H^+(t) & = t^{H-\tfrac{1}{2}}\cdot B(t) -(H-\tfrac{1}{2}) \cdot\int_0^t (t-s)^{H-\tfrac{3}{2}}\cdot\big(B(t)-B(s)\big)\,{\rm d}s\\
& \qquad\qquad + (H-\tfrac{1}{2}) \cdot\int_{-\infty}^0 \big[(t-s)^{H-\tfrac{3}{2}} - (-s)^{H-\tfrac{3}{2}}\big]\cdot B(s)\,{\rm d}s,\\
\tilde X_H^-(t) & = -t^{H-\tfrac{1}{2}}\cdot B(t) + (H-\tfrac{1}{2})\cdot \int_0^t s^{H-\tfrac{3}{2}}\cdot B(s)\,{\rm d}s\\
& \qquad\qquad + (H-\tfrac{1}{2})\cdot \int_t^\infty \left[s^{H-3/2} - (s-t)^{H-3/2}\right] \cdot\big(B(s)-B(t)\big)\,{\rm d}s.
\end{split}
\end{equation}

\begin{proposition}\label{prop:continuous_modification_pm}
$\{\tilde X^\pm_H(t): (H,t)\in(0,1)\times\R_+\}$ is a continuous modification of $\{X^\pm_H(t): (H,t)\in(0,1)\times\R_+\}$.
\end{proposition}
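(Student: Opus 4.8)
The plan is to prove the statement in two stages: first the \emph{modification} property, namely that $\tilde X^\pm_H(t)=X^\pm_H(t)$ almost surely for each fixed $(H,t)\in(0,1)\times\R_+$, and then the \emph{joint continuity} of the explicit field $(H,t)\mapsto\tilde X^\pm_H(t)$ for almost every Brownian path. I focus on $\tilde X^+_H$, the case of $\tilde X^-_H$ being entirely analogous after the obvious reflection.

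For the pointwise identity I would use integration by parts for Wiener integrals, treating the two integrals in the definition \eqref{def:X_pm} of $X^+_H$ separately. On the interval $(0,t)$ the kernel $g(s)=(t-s)^{H-1/2}$ is singular at $s=t$ when $H<\tfrac12$, so I would first write $\int_0^t g\,{\rm d}B = \int_0^t g\,{\rm d}(B(\cdot)-B(t))$ and integrate by parts; the boundary contribution at $s=t$ is $\lim_{s\uparrow t}(t-s)^{H-1/2}(B(s)-B(t))$, which vanishes because $B$ is locally Hölder of every order $\beta<\tfrac12$ and $H-\tfrac12+\beta>0$ for $\beta$ close to $\tfrac12$, while the boundary at $s=0$ produces the term $t^{H-1/2}B(t)$. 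This yields exactly the first two terms of $\tilde X^+_H(t)$. On $(-\infty,0)$ the kernel $h(s)=(t-s)^{H-1/2}-(-s)^{H-1/2}$ satisfies $h(s)=O(|s|^{H-3/2})$ as $s\to-\infty$, so integration by parts has a vanishing boundary term at $-\infty$ by the law of the iterated logarithm (the product being $O(|s|^{H-1+\ep})$) and a vanishing boundary term at $0$ again by Hölder continuity; differentiating $h$ then gives precisely the last term of $\tilde X^+_H(t)$. To make these manipulations rigorous I would approximate $g$ and $h$ by truncated, bounded kernels for which the integration-by-parts identity holds as an elementary $L^2$, hence almost sure, statement, and then pass to the limit: the left-hand Wiener integrals converge in $L^2$ by the isometry since the kernels converge in $L^2$, while the right-hand pathwise integrals converge almost surely and in $L^2$ once one checks their absolute convergence near the singularities and at infinity using the same Hölder and growth estimates.

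For the continuity stage, since $\tilde X^\pm_H(t)$ is a linear functional of $B$ it is a Gaussian field, and by the first stage its increments coincide almost surely with those of $X^\pm_H$, whose covariance \eqref{eq:general_covariance_structure} is known; thus all $L^p$ norms of an increment $\tilde X^\pm_{H_1}(t_1)-\tilde X^\pm_{H_2}(t_2)$ are comparable to its $L^2$ norm, and a Kolmogorov--Chentsov moment bound yields the existence of a continuous modification. To identify this continuous modification with the \emph{explicit} functional in \eqref{eq:Xtilde_pm}, I would show directly, by dominated convergence along sequences $(H_n,t_n)\to(H,t)$, that each term of \eqref{eq:Xtilde_pm} is continuous for almost every path; this requires integrable majorants for the singular integrands that are uniform for $(H,t)$ ranging over compact subsets of $(0,1)\times\R_+$, obtained by combining the local modulus of continuity of $B$ near $s=t$ and $s=0$ with a polynomial-in-$|s|$ tail bound at infinity.

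The main obstacle I anticipate is precisely this uniform integrable domination in the continuity stage: the exponents $H-\tfrac32$ appearing in the integrands blow up as $H\downarrow0$ and the singularity at $s=t$ migrates with $t$, so the majorants must be chosen carefully to remain integrable uniformly over a compact parameter window, and the tail control at $\pm\infty$ must absorb the almost sure growth of $B$. The pointwise integration-by-parts identity, by contrast, is a careful but essentially routine regularization argument.
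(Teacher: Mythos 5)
Your proof plan is correct and follows essentially the same route as the paper, whose own proof of Proposition~\ref{prop:continuous_modification_pm} simply defers to \cite{bisewski2021derivatives} (Propositions~2--3 and Lemma~2 there), where the pointwise a.s.\ identity is obtained by exactly the regularized integration-by-parts argument you describe (with the boundary terms at the singularities killed by local H\"older continuity of order $\beta>\tfrac{1}{2}-H$ and the tails by the growth of $B$), and joint continuity of \eqref{eq:Xtilde_pm} by dominated convergence with majorants uniform on compacts of $(0,1)\times\R_+$. The only inessential detour is your Kolmogorov--Chentsov step: once each term of \eqref{eq:Xtilde_pm} is shown a.s.\ jointly continuous by domination, that already \emph{is} the asserted continuity, so no separate existence argument for a continuous modification is needed.
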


Now let us define the counterpart of the process $X^\apm_H(t)$ from Eq.~\eqref{def:X_apm}, that is, for every $\apm:=(\ap,\am)\in\R^2_0$ define stochastic process $\{\tilde X^\apm_H(t): (H,t)\in(0,1)\times\R_+\}$, where
\begin{equation}\label{def:PWZ}
\tilde X^\apm_H(t) = \ap \tilde X_H^+(t) + \am \tilde X_H^-(t)
\end{equation}

\begin{corollary}\label{coro:continuous_modification}
$\{\tilde X^\apm_H(t): (H,t)\in(0,1)\times\R_+\}$ is a continuous modification of $\{X^\apm_H(t): (H,t)\in(0,1)\times\R_+\}$.
\end{corollary}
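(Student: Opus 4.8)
The plan is to deduce the corollary directly from Proposition~\ref{prop:continuous_modification_pm} by exploiting the fact that $X^\apm_H$ and $\tilde X^\apm_H$ are defined as the \emph{same} linear combination of their ``$+$'' and ``$-$'' constituents, cf.\ Eqs.~\eqref{def:X_apm} and \eqref{def:PWZ}. Recall that the assertion ``continuous modification'' carries two requirements: (i) for every fixed $(H,t)\in(0,1)\times\R_+$ one has $\p(\tilde X^\apm_H(t)=X^\apm_H(t))=1$ (the modification property), and (ii) the random surface $(H,t)\mapsto \tilde X^\apm_H(t)$ is almost surely continuous. I would verify each in turn, in both cases reducing to the corresponding statement for the two building blocks.

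For the modification property, fix $(H,t)$ and set $A_\pm:=\{\tilde X^\pm_H(t)=X^\pm_H(t)\}$. Proposition~\ref{prop:continuous_modification_pm} gives $\p(A_+)=\p(A_-)=1$, hence $\p(A_+\cap A_-)=1$. On $A_+\cap A_-$ we have, by \eqref{def:X_apm} and \eqref{def:PWZ},
\[
\tilde X^\apm_H(t)=\ap\tilde X^+_H(t)+\am\tilde X^-_H(t)=\ap X^+_H(t)+\am X^-_H(t)=X^\apm_H(t),
\]
so $\p(\tilde X^\apm_H(t)=X^\apm_H(t))=1$. For continuity, let $\Omega_\pm$ be the almost-sure events on which $(H,t)\mapsto\tilde X^\pm_H(t)$ is continuous, furnished by Proposition~\ref{prop:continuous_modification_pm}; then $\p(\Omega_+\cap\Omega_-)=1$, and on $\Omega_+\cap\Omega_-$ the surface $(H,t)\mapsto\ap\tilde X^+_H(t)+\am\tilde X^-_H(t)=\tilde X^\apm_H(t)$ is continuous as a finite linear combination of continuous functions. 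Combining the two parts establishes the claim.

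I expect no genuine obstacle here: the corollary is a routine consequence of the linearity of both defining formulas in the pair $(\ap,\am)$, together with the stability of the modification and continuity properties under finite linear combinations. The only point deserving the slightest care is to intersect the relevant full-probability events \emph{before} forming the linear combination, which is harmless since a finite intersection of events of probability one again has probability one.
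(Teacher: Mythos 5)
Your proof is correct and matches the paper's (implicit) reasoning: the paper states this corollary without proof precisely because it follows from Proposition~\ref{prop:continuous_modification_pm} by the linearity of \eqref{def:X_apm} and \eqref{def:PWZ}, exactly as you argue. Your care in intersecting the two full-probability events before taking the linear combination is the right (and only) point of substance, so nothing is missing.
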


For completeness, we give a short proof of Proposition \ref{prop:continuous_modification_pm} below.
\begin{proof}[Proof of Proposition \ref{prop:continuous_modification_pm}]
In \cite[Proposition~2]{bisewski2021derivatives} it was shown that $\{\tilde X^+_H(t): (H,t)\in(0,1)\times\R_+\}$ is a continuous modification of $\{X^+_H(t): (H,t)\in(0,1)\times\R_+\}$. Showing the sample path continuity of $\{\tilde X^-_H(t): (H,t)\in(0,1)\times\R_+\}$ is analogous to showing sample path continuity of $\tilde X^+$ which was done in \cite[Proposition~3]{bisewski2021derivatives}. Finally, due to \cite[Lemma~2]{bisewski2021derivatives}, for any $(H,t)\in(0,1)\times\R_+$ we have
\begin{align*}
\tilde X^-_H(t) =  X^-_H(t) \qquad \text{a.s.}
\end{align*}
This shows that $\tilde X^-$ is a modification of $X^-$ and concludes the proof.
\end{proof}


\subsection{Joint density of the supremum of drifted Brownian Motion and its time}\label{s:joint_density}

In this section we recall the formulae for the joint density of the supremum of (drifted) Brownian motion over $[0,T]$ and its time due to \cite{shepp1979joint}. This section relies heavily on \cite[Section~2]{bisewski2021derivatives}.

Let $\{B(t):t\in\R_+\}$ be a standard Brownian motion. For any $T>0$ and $a\in\R$ consider the supremum of drifted Brownian motion its time, i.e.
\begin{equation}\label{def:M_and_tau_H=brown}
M_{1/2}(T,a) := \sup_{t\in[0,T]} \{B(t) - at\}, \quad \tau_{1/2}(T,a) := \argmax_{t\in[0,T]}\{B(t) - at\}
\end{equation}
and its expectations
\begin{equation}\label{def:expectations_H=brown}
\mathscr M_{1/2}(T,a) := \e\left(M_{1/2}(T,a)\right), \quad \mathcal E_{1/2}(T,a) := \e\left(\tau_{1/2}(T,a)\right)
\end{equation}
In the following, let $p(t,y;T,a)$ be the joint density of $(\tau_{1/2}(T,a), M_{1/2}(T,a))$, i.e.
\begin{equation}
p(t,y; T,a) := \frac{\p(\tau_{1/2}(T,a)\in{\rm d}t, M_{1/2}(T,a)\in{\rm d}y)}{{\rm d}t\,{\rm d}y}
\end{equation}
When $T\in(0,\infty)$ and $a\in\R$ then
\begin{equation}\label{eq:density_T}
p(t,y;T,a) = \frac{y\exp\left\{-\tfrac{(y+ta)^2}{2t}\right\}}{\pi t^{3/2}\sqrt{T-t}} \left(e^{-a^2(T-t)/2} + a\sqrt{\tfrac{\pi(T-t)}{2}}~\erfc\Big(-a\sqrt{\tfrac{T-t}{2}}\Big)\right)
\end{equation}
for $t\in(0,T)$ and $y>0$. When $a>0$, then the pair $(\tau_{1/2}(\infty,a), M_{1/2}(\infty,a))$ is well-defined with
\begin{equation}\label{eq:density_infty}
p(t,y;\infty,a) = \frac{\sqrt{2}\,ay\exp\left\{-\tfrac{(y+ta)^2}{2t}\right\}}{t^{3/2}\sqrt{\pi}},
\end{equation}
for $t>0$ and $y>0$.
\begin{proposition}\label{prop:tau_expectation}
It holds that
\begin{itemize}
\item if $T\in(0,\infty)$ and $a\neq 0$ then
\begin{align*}
\mathscr M_{1/2}(T,a) & = \frac{1}{2a}\left(-a^2T + (1+ a^2T)\erf\Big(a\sqrt{\tfrac{T}{2}}\Big)+ \sqrt{\frac{2T}{\pi}}\cdot ae^{-a^2T/2}\right) \\
\mathcal E_{1/2}(T,a) & = 
\frac{1}{2a^2}\left(a^2T + (1-a^2T)\erf\Big(a\sqrt{\tfrac{T}{2}}\Big) - \sqrt{\frac{2T}{\pi}}\cdot ae^{-a^2T/2}\right);
\end{align*}
\item if $a>0$ then $\mathscr M_{1/2}(\infty,a) = \frac{1}{2a}$ and $\mathcal E_{1/2}(\infty,a) = \frac{1}{2a^2}$;
\item if $T\in(0,\infty)$ then $\mathscr M_{1/2}(T,0) = \sqrt{\frac{2T}{\pi}}$ and $\mathcal E_{1/2}(T,0) = \frac{T}{2}$.
\end{itemize}
\end{proposition}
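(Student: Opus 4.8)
The plan is to read off both expectations directly from the explicit joint density in \eqref{eq:density_T}. Since the weights $y$ and $t$ are nonnegative on the support, Tonelli's theorem lets me write
\[
\mathscr M_{1/2}(T,a)=\int_0^T\!\!\int_0^\infty y\,p(t,y;T,a)\,{\rm d}y\,{\rm d}t,\qquad
\mathcal E_{1/2}(T,a)=\int_0^T\!\!\int_0^\infty t\,p(t,y;T,a)\,{\rm d}y\,{\rm d}t,
\]
and integrate over $y$ first and over $t$ second. The infinite-horizon and driftless assertions I would then obtain either by repeating the argument with the simpler density \eqref{eq:density_infty}, or by passing to the limits $T\to\infty$ (for $a>0$) and $a\to0$ in the finite-horizon formulas.

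First I would dispatch the inner integral. For fixed $t$ the entire $y$-dependence of $p$ lives in the factor $y\exp\{-(y+ta)^2/(2t)\}$, so the two inner integrals are the truncated Gaussian moments $\int_0^\infty y^2\exp\{-(y+ta)^2/(2t)\}\,{\rm d}y$ and $\int_0^\infty y\exp\{-(y+ta)^2/(2t)\}\,{\rm d}y$. The substitution $u=(y+ta)/\sqrt t$ converts these into moments of a standard Gaussian over $[a\sqrt t,\infty)$; integrating the polynomial-times-Gaussian part by parts and retaining a pure tail term, each reduces to a combination of $e^{-a^2t/2}$ and $\erfc(a\sqrt{t/2})$ with polynomial-in-$t$ coefficients. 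This step is routine.

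The substance of the proof is the outer $t$-integration. After the inner step the integrand on $(0,T)$ is the product of the density factor $e^{-a^2(T-t)/2}+a\sqrt{\pi(T-t)/2}\,\erfc(-a\sqrt{(T-t)/2})$, the singular weight $1/(t^{3/2}\sqrt{T-t})$, and the $e^{-a^2t/2}$/$\erfc(a\sqrt{t/2})$ combination just produced. The key structural simplification is that whenever the two exponentials multiply, their arguments add to a constant, $a^2t/2+a^2(T-t)/2=a^2T/2$, so those contributions collapse to $e^{-a^2T/2}$ times an elementary integral in $t$. What remains are the exponential-times-$\erfc$ and $\erfc$-times-$\erfc$ cross terms carrying the integrable $1/\sqrt t$ and $1/\sqrt{T-t}$ singularities, and I expect these to be the main obstacle. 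Handling them cleanly calls for the symmetric substitution $t=T\sin^2\phi$, which tames $\sqrt t\,\sqrt{T-t}$ simultaneously, followed by integrations by parts that differentiate the $\erfc$ factors back into Gaussians and reduce everything to standard error-function primitives. The algebra is heavy but each resulting one-dimensional integral is of a tabulated type, and collecting the terms should reproduce the closed forms in the statement.

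Finally, for the remaining two bullets the computations are much lighter. With $a>0$ and $T=\infty$ the density \eqref{eq:density_infty} has no $\erfc$ factor, so the $t$-integral is elementary and yields $\mathscr M_{1/2}(\infty,a)=\tfrac{1}{2a}$ and $\mathcal E_{1/2}(\infty,a)=\tfrac{1}{2a^2}$; the same values also drop out of the finite-horizon formulas as $T\to\infty$, since then $\erf(a\sqrt{T/2})\to1$. The driftless identities $\mathscr M_{1/2}(T,0)=\sqrt{2T/\pi}$ and $\mathcal E_{1/2}(T,0)=T/2$ follow by letting $a\to0$ in the finite-horizon formulas (expanding $\erf$ and the exponential to the first nonvanishing order), the latter reflecting the arcsine symmetry of the time of the maximum; these limits double as consistency checks on the main computation.
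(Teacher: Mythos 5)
Your plan is sound, and it splits into one half that coincides with the paper and one half that genuinely differs. For $\mathcal E_{1/2}(T,a)$ your route is essentially the paper's own: integrating the joint density \eqref{eq:density_T} over $y$ first just reproduces the marginal law of $\tau_{1/2}(T,a)$ (the product of an $e^{-a^2t/2}/\sqrt{t}$-plus-$\erfc$ factor and its mirror image in $T-t$) that the paper takes from Borodin--Salminen 2.1.12.4 as the starting point of its derivation in Appendix~\ref{appendix:calculations}; from there both you and the paper face the same exponential-times-$\erfc$ and $\erfc$-times-$\erfc$ integrals, which the paper handles by normalizing $t\mapsto tT$, $u=a\sqrt{T/2}$, exploiting $\erfc(-z)=2-\erfc(z)$ and the reflection $t\mapsto 1-t$ (which symmetrizes the $\erfc\cdot\erfc$ term just as your $t=T\sin^2\phi$ substitution would), repeated integration by parts, and one tabulated integral. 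For $\mathscr M_{1/2}(T,a)$, however, the paper does \emph{not} integrate the density at all: it reads the formula off the known Laplace transform of $M_{1/2}(T,a)$ (formulas 1.1.1.3 and 2.1.1.3 in Borodin--Salminen), whereas you compute $\int_0^T\!\int_0^\infty y\,p\,{\rm d}y\,{\rm d}t$ directly; after the inner Gaussian-moment step the $t^{3/2}$ singularity cancels and you are left with integrals of exactly the same four types as in the $\mathcal E$ computation, so your route costs more algebra but is self-contained --- which has real value here, since the paper itself reports that the handbook's Laplace-transform entries for $\tau$ (1.1.12.3, 2.1.12.3) are wrong, and a direct derivation doubles as a check. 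Two small differences to tidy up: the paper treats the cases $(T,0)$ and $(\infty,a)$ by direct computation with \eqref{eq:density_T} and \eqref{eq:density_infty}, while you pass to limits in the finite-horizon formulas; that is legitimate but needs the (easy) justification that the expectations are continuous --- e.g.\ $|M_{1/2}(T,a)-M_{1/2}(T,a')|\le T|a-a'|$ pathwise gives continuity of $\mathscr M_{1/2}$ in $a$, monotone convergence handles $T\to\infty$ since both $M_{1/2}(T,a)$ and $\tau_{1/2}(T,a)$ are a.s.\ nondecreasing in $T$, and for $\mathcal E_{1/2}$ in $a$ one uses the a.s.\ convergence of the argmax (the paper's Lemma~\ref{lem:tau_continuity}(i)) together with the bound $\tau\le T$.
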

\begin{proof}
The fomula for $\mathscr M_{1/2}(T,a)$ can be obtained from the Laplace transform of $M_{1/2}(T,a)$, see formulas 1.1.1.3 and  2.1.1.3 in \cite{borodin2002handbook}. The formula for $\mathcal E_{1/2}(T,a)$ could similarly be obtained from the Laplace transform of $\tau_{1/2}(T,a)$. However, the numerical calculations indicate that the formulas for Laplace transforms 1.1.12.3, 2.1.12.3 in \cite{borodin2002handbook} are incorrect. Therefore, we provide our own derivation of $\mathcal E_{1/2}(T,a)$ in Appendix \ref{appendix:calculations}.
\end{proof}

Finally, we introduce a certain functional of Brownian motion, which plays an important role in this manuscript. It is noted that its special case ($H=\tfrac{1}{2}$) appeared in \cite[Eq.~(11)]{bisewski2021derivatives}. In what follows let $Y(t) := B(t)-at$
\begin{align}\label{def:I}
I_H(t,y) & := \e\left(\int_0^t(t-s)^{H-3/2}(Y(t)-Y(s)){\rm d}s \,\Big\vert\, (\tau_{1/2}(T,a),M_{1/2}(T,a)) = (t,y)\right)
\end{align}
Following \cite{bisewski2021derivatives}, we recognize that the conditional distribution of the process $\{Y(t)-Y(t-s):s\in[0,t]\}$ given $(\tau_{1/2}(T,a),M_{1/2}(T,a)) = (t,y)$ follows the law of the generalized 3-dimensional Bessel bridge from $(0,0)$ to $(t,y)$. Therefore, $I_H(t,y)$ can be thought of an expected value of a certain `Brownian area', see \cite{janson2007brownian} for a survey on Brownian areas.
It turns out that the function $I_H(t,y)$ can be explicitly calculated. In the following, $U(a,b,z)$ is the Tricomi's confluent hypergeometric function, see \eqref{def:tricomi} in Appendix~\ref{appendix:special_functions}.
\begin{lemma}\label{lem:IH}
If $H\in(0,\tfrac{1}{2})\cup(\tfrac{1}{2},1)$, and $t,y>0$, then
\begin{align*}
I_H(t,y) = \frac{t^{H+1/2}}{y(H-\tfrac{1}{2})(H+\tfrac{1}{2})} \left(1-\frac{\Gamma(H)}{\sqrt{\pi}}U\left(H-\tfrac{1}{2},\tfrac{1}{2},\tfrac{y^2}{2t}\right)\right) + \frac{t^{H-1/2}y}{H+\tfrac{1}{2}}.
\end{align*}
\end{lemma}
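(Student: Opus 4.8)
The plan is to exploit the Bessel-bridge identity recalled just above the statement. First I would substitute $u=t-s$ to rewrite the integrand as $\int_0^t u^{H-3/2}\big(Y(t)-Y(t-u)\big)\,\mathrm du$, and observe that, conditionally on $(\tau_{1/2}(T,a),M_{1/2}(T,a))=(t,y)$, the process $u\mapsto Z(u):=Y(t)-Y(t-u)$ is a $3$-dimensional Bessel bridge from $0$ to $y$ over $[0,t]$. Since $Z\ge 0$ and $u^{H-3/2}>0$, Tonelli lets me pull the conditional expectation inside the integral, so that $I_H(t,y)=\int_0^t u^{H-3/2}\, m(u)\,\mathrm du$ with $m(u):=\e Z(u)$ the mean function of the bridge. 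The whole problem thus reduces to (i) finding $m(u)$ in closed form and (ii) evaluating a single $u$-integral.

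For (i) I would use the representation of the Bessel$(3)$ bridge as the Euclidean norm of a $3$-dimensional Brownian bridge from the origin to a point at distance $y$; at time $u$ this is $Z(u)=|\beta(u)|$ with $\beta(u)\sim N\big((\tfrac{uy}{t},0,0),\,\sigma^2 I_3\big)$ and $\sigma^2=\tfrac{u(t-u)}{t}$. Writing $c=\tfrac{uy}{t}$, the marginal density of $Z(u)$ is $\tfrac{r}{c\sigma\sqrt{2\pi}}\big(e^{-(r-c)^2/2\sigma^2}-e^{-(r+c)^2/2\sigma^2}\big)$ (which is exactly the Bessel$(3)$-bridge marginal obtained from the transition density), and a short Gaussian computation gives $m(u)=\sigma\sqrt{\tfrac2\pi}\,e^{-c^2/2\sigma^2}+\big(\tfrac{\sigma^2}{c}+c\big)\erf\!\big(\tfrac{c}{\sigma\sqrt2}\big)$. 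A sanity check at the endpoints ($m(0^+)=0$, $m(t^-)=y$) confirms the formula.

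For (ii) I would substitute $\xi=u/t$ and then $q=\xi/(1-\xi)$, which turns both pieces of $\int_0^t u^{H-3/2}m(u)\,\mathrm du$ into Euler-type integrals matching the representation $U(a,b,z)=\tfrac1{\Gamma(a)}\int_0^\infty e^{-zq}q^{a-1}(1+q)^{b-a-1}\,\mathrm dq$ with $z=\tfrac{y^2}{2t}$. The exponential part of $m$ yields a multiple of $\Gamma(H)\,U(H,-\tfrac12,z)$; for the $\erf$ part I would integrate by parts (the derivative of $\erf$ being Gaussian), which produces a boundary term together with multiples of $\Gamma(H)\,U(H,\tfrac32,z)$ and $\Gamma(H+1)\,U(H+1,\tfrac32,z)$. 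The boundary term at $\xi=0$ vanishes because $H>0$, while the one at $\xi=1$ supplies the rational part of the answer.

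The main obstacle is the final consolidation into a single Tricomi function. Here I would use Kummer's transformation $U(a,b,z)=z^{1-b}U(a-b+1,2-b,z)$, giving $U(H,\tfrac32,z)=z^{-1/2}U(H-\tfrac12,\tfrac12,z)$ and $U(H+1,\tfrac32,z)=z^{-1/2}U(H+\tfrac12,\tfrac12,z)$, together with the contiguous relations for $U$ (which let me express $U(H,-\tfrac12,z)$ through $U(H,\tfrac32,z)$ and $U(H+1,\tfrac32,z)$ with polynomial coefficients in $z$). The key cancellation is that the $U(H+\tfrac12,\tfrac12,z)$ contributions coming from the exponential part and from the $\erf$ part cancel exactly, while the $U(H-\tfrac12,\tfrac12,z)$ terms combine via $\tfrac1{H+1/2}-\tfrac1{H-1/2}=\tfrac{-1}{(H-1/2)(H+1/2)}$ and the rational terms collapse to the stated expression. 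Keeping track of the $\Gamma$-factors and of the substitution $z=\tfrac{y^2}{2t}$ throughout is the only delicate bookkeeping; everything else is routine.
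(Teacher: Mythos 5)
Your proposal is correct and takes essentially the same route as the paper's proof: the same conditional Bessel(3)-bridge density, the same Gaussian moment computation (your mean function $m(u)$ is exactly what the paper's identity for $\int_0^\infty x^2\big(e^{-(x-b)^2/2}-e^{-(x+b)^2/2}\big)\,{\rm d}x$ yields after Tonelli), the same substitution $q=s/(1-s)$ matching Tricomi's integral representation, and the same endgame via integration by parts on the $\erf$ terms, the contiguous relations for $U$, and the transformation $U(a,b,z)=z^{1-b}U(a-b+1,2-b,z)$. Computing the bridge mean first rather than evaluating the double integral directly is only a cosmetic reordering of the paper's steps.
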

The derivation of the result in Lemma \ref{lem:IH} is purely calculational. For completeness, we provide it in Appendix \ref{appendix:calculations}.

\begin{proof}[Proof of Lemma \ref{lem:IH}]
Let
\begin{equation*}
g(x,s;t,y) := \frac{\p\left(Y(t)-Y(t-s)\in {\rm d}x \mid (\tau_{1/2}(T,a),M_{1/2}(T,a)) = (t,y)\right)}{{\rm d}x}.
\end{equation*}
From \cite[Eq.~(10)]{bisewski2021derivatives} we have
\begin{equation*}
g(x,s;t,y)  = \frac{\frac{x}{s^{3/2}} \exp\{-\frac{x^2}{2s}\}}{\frac{y}{t^{3/2}}\exp\{-\frac{y^2}{2t}\}} \cdot \frac{1}{\sqrt{2\pi(t-s)}}\left[e^{-\frac{(y-x)^2}{2(t-s)}} - e^{-\frac{(y+x)^2}{2(t-s)}} \right]
\end{equation*}
for $x>0$. Using Fubini-Tonelli theorem we have
\begin{equation}\label{eq:IH_integral}
I_H(t,y) = \int_0^t\int_0^\infty s^{H-3/2}x\cdot g(x,s;t,y)\,{\rm d}x{\rm d}s.
\end{equation}
The rest of the proof is purely calculational. For completeness, it is given in Appendix \ref{appendix:calculations}.
\end{proof}


\section{Main results}\label{s:main_results}

Let $\{B(t) : t\in\R\}$ be a standard, two-sided Brownian motion. Consider the PWZ representation of LFBM with parameter $\apm=\apm\in\R^2_0$, that is $\{\tilde B^\apm_H(t):t\in\R^+\}$, where
\begin{align*}
\tilde B^\apm_H(t) := \frac{\tilde X^\apm_H(t)}{\sqrt{V^\apm_H}}
\end{align*}
with $\tilde X^\apm_H(t)$ defined in \eqref{def:PWZ} and $V^\apm_H$ defined in \eqref{def:V}. Then, according to Corollary~\ref{coro:continuous_modification}, $\{\tilde B^\apm_H(t):H\in(0,1)\times\R_+\}$ is a continuous modification of $\{B^\apm_H(t):H\in(0,1)\times\R_+\}$ and therefore, for each fixed $H$, $\{\tilde B^\apm_H(t): t\in\R_+\}$ is a fractional Brownian motion with Hurst index $H$. It is noted that all processes $\tilde B^\apm_H$ live on the same probability space and are, in fact, defined pathwise for every realization of the driving Brownian motion.

For each $\apm\in\R^2_0$, $a\in\R$ and $T>0$ define
\begin{equation}\label{def:M_and_tau}
M^\apm_H(T,a) := \sup_{t\in[0,T]}\left\{B_H^\apm(t) - at\right\}, \quad \tau^\apm_H(T,a) := \argmax_{t\in[0,T]} \left\{B_H^\apm(t) - at\right\},
\end{equation}
which is the supremum of drifted Brownian motion with paremeter $\apm$ and its location. Now we define the expected expected values of these random variables
\begin{equation}\label{def:expectations}
\mathscr M_H(T,a) := \e\left( M_H^\apm(T,a)\right), \quad \mathcal E_H(T,a) := \e \left(\tau_H^\apm(T,a)\right).
\end{equation}
Recall that the random variables and their expectations above were already defined for $H=\tfrac{1}{2}$ in Eq.~\eqref{def:M_and_tau_H=brown} and Eq.~\eqref{def:expectations_H=brown}. Notice how $\mathscr M_H(T,a)$ and $\mathcal E_H(T,a)$ do not dependent on $\apm$, because as $\apm$ vary, the joint law of the supremum of its location does not change. Now, for each $\apm\in\R^2_0$ define
\begin{align}\label{def:mHapm}
m^\apm_H(T,a) &:= \e\left(\tilde B^\apm_H(\tau_{1/2}(T,a)) - a\tau_{1/2}(T,a)\right),
\end{align}
which, in words, is the expected value of drifted fBm with parameter $\apm$ evaluated at time of the supremum of the driving Brownian motion. Clearly, this yields
$$\mathscr M_H(T,a) \geq m^\apm_H(T,a)$$
and $m^\apm_{1/2}(T,a) = \mathscr M_{1/2}(T,a)$ for every $\apm\in\R^2_0$. We further maximize our lower bound by taking the supremum over all feasible pairs $\apm$ and define
\begin{equation}\label{def:mH}
m_H(T,a) := \sup_{\apm\in\R^2_0} m^\apm_H(T,a).
\end{equation}
It turns out that the value of $m_H(T,a)$ can be found explicitly. Before showing the formula in Proposition~\ref{prop:mH} we define a useful functional
\begin{equation}\label{def:J}
\mathcal J_H(T,a) := \e\left(X^+_H(\tau_{1/2}(T,a))\right).
\end{equation}
In the following $\gamma(\alpha,z)$ is the incomplete Gamma function, see \eqref{def:incomplete_gamma} in Appendix \ref{appendix:special_functions}.
\begin{proposition}\label{prop:Jvalue}
It holds that
\begin{equation}\label{eq:Jvalue}
\mathcal J_H(T,a) =
\begin{cases}
\frac{2^{H}}{\sqrt{2\pi}(H+\tfrac{1}{2})} \cdot |a|^{-2H}\gamma(H,\tfrac{a^2T}{2}), & a\neq 0,\ T\in(0,\infty)\\
\frac{2^{H}\Gamma(H)}{\sqrt{2\pi}(H+\tfrac{1}{2})} \cdot |a|^{-2H}, & a > 0, \ T=\infty \\
\frac{T^{H}}{\sqrt{2\pi}H(H+\tfrac{1}{2})}, & a=0, \ T\in(0,\infty)
 \end{cases}
\end{equation}
\end{proposition}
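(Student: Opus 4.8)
The plan is to exploit the pathwise Paley--Wiener--Zygmund (PWZ) representation \eqref{eq:Xtilde_pm} together with the Bessel-bridge functional $I_H$ evaluated in Lemma~\ref{lem:IH}. Since $\tilde X^+_H(t)=X^+_H(t)$ almost surely (Proposition~\ref{prop:continuous_modification_pm}) and $\tau_{1/2}(T,a)$ is a measurable functional of the driving Brownian path, I would first rewrite
\[
\mathcal J_H(T,a) = \e\big(\tilde X^+_H(\tau_{1/2}(T,a))\big),
\]
and then condition on $(\tau_{1/2}(T,a),M_{1/2}(T,a))=(t,y)$, evaluating each of the three summands of $\tilde X^+_H$ separately.

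First I would observe that the third summand of $\tilde X^+_H$ in \eqref{eq:Xtilde_pm}, the integral over $(-\infty,0)$, involves only $\{B(s):s<0\}$, which is independent of the positive-time path and hence of $(\tau_{1/2},M_{1/2})$; since $\e B(s)=0$, Fubini makes its conditional expectation vanish. For the first summand, at $t=\tau_{1/2}(T,a)$ one has $B(t)=M_{1/2}(T,a)+a\,\tau_{1/2}(T,a)$, so its conditional expectation equals $t^{H-1/2}(y+at)$. For the middle summand, writing $B(t)-B(s)=(Y(t)-Y(s))+a(t-s)$ with $Y(u)=B(u)-au$ splits the integral into exactly the integrand defining $I_H(t,y)$ in \eqref{def:I} (whose conditional expectation is $I_H(t,y)$ by definition) plus a deterministic drift correction $a\,t^{H+1/2}/(H+\tfrac12)$. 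Collecting the three contributions and substituting Lemma~\ref{lem:IH}, the conditional expectation should collapse to
\[
c(t,y) := \frac{a\,t^{H+1/2}}{H+\tfrac12} + \frac{t^{H-1/2}\,y}{H+\tfrac12} - \frac{t^{H+1/2}}{(H+\tfrac12)\,y}\left(1 - \frac{\Gamma(H)}{\sqrt{\pi}}\,U\big(H-\tfrac12,\tfrac12,\tfrac{y^2}{2t}\big)\right);
\]
as a sanity check, at $H=\tfrac12$ Tricomi's term drops out ($U(0,\tfrac12,\cdot)\equiv1$, $\Gamma(\tfrac12)=\sqrt\pi$) and $c(t,y)=y+at$, consistent with $X^+_{1/2}(\tau_{1/2})=B(\tau_{1/2})$.

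It then remains to integrate $\mathcal J_H(T,a)=\int_0^T\!\int_0^\infty c(t,y)\,p(t,y;T,a)\,{\rm d}y\,{\rm d}t$ against the joint density \eqref{eq:density_T}. I would perform the inner $y$-integral first: the factor $y$ in $p$ cancels the $1/y$ in the Tricomi term, reducing the three contributions to the Gaussian moments $\int_0^\infty y\,e^{-(y+ta)^2/(2t)}\,{\rm d}y$ and $\int_0^\infty y^2 e^{-(y+ta)^2/(2t)}\,{\rm d}y$ (elementary, producing $\erfc$ and exponential terms), together with $\int_0^\infty\big(1-\tfrac{\Gamma(H)}{\sqrt\pi}U(H-\tfrac12,\tfrac12,\tfrac{y^2}{2t})\big)e^{-(y+ta)^2/(2t)}\,{\rm d}y$. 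This last integral is the main obstacle: it requires the integral identities for Tricomi's $U$ recorded in Appendix~\ref{appendix:special_functions}, and after the subsequent $t$-integration all error-function and hypergeometric contributions must recombine, leaving only the incomplete Gamma function $\gamma(H,\tfrac{a^2T}{2})$. This bookkeeping is the delicate calculational core, which is why the paper defers it to Appendix~\ref{appendix:calculations}.

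Finally, the remaining two cases follow by consistency and limiting arguments. Letting $T\to\infty$ with $a>0$ (equivalently computing directly with the infinite-horizon density \eqref{eq:density_infty}) replaces $\gamma(H,\tfrac{a^2T}{2})$ by $\Gamma(H)$, yielding the second branch; and letting $a\to0$, using $\gamma(H,\tfrac{a^2T}{2})\sim(\tfrac{a^2T}{2})^{H}/H$, recovers the driftless value $T^{H}/(\sqrt{2\pi}\,H(H+\tfrac12))$, the third branch. I would justify passing to these limits by monotone or dominated convergence, which closes the proof in all three regimes.
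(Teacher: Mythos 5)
Your proposal reproduces the paper's own proof essentially step for step: the PWZ reduction with the negative-time integral killed by independence and centredness, the substitution $B=Y+a\,\cdot$ leading to the conditional expectation $I_H(t,y)$ of Lemma~\ref{lem:IH} (your $c(t,y)$ is precisely the paper's decomposition $\mathcal J_H^{(1)}+\mathcal J_H^{(2)}$, where the Gaussian $y$-moments make $\mathcal J_H^{(1)}$ vanish), the deferral of the Tricomi-function integration against Shepp's density to the appendix identities, and the two remaining branches obtained as limits $a\to0$ and $T\to\infty$ justified by dominated convergence. The only ingredient the paper makes explicit that you leave implicit is the a.s.\ continuity of the argmax $\tau_{1/2}(T,a)$ in $a$ and in $T$ (Lemma~\ref{lem:tau_continuity}), which, combined with the pathwise continuity of the PWZ field and explicit dominating bounds, is what actually licenses those limiting arguments.
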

Finally, we can show that
\begin{proposition}\label{prop:mH}
If $a\in\R$ and $T>0$ or $a>0$ and $T=\infty$, then
\begin{equation}\label{eq:m_formula}
m^\apm_H(T,a) = \frac{\ap-\am }{\sqrt{V^\apm_H}}\cdot \mathcal J_H(T,a) - a\mathcal E_{1/2}(T,a)
\end{equation}
and
\begin{equation}
m_H(T,a) = m^{(1,-1)}_H(T,a) = \frac{\mathcal J_H(T,a)}{C_H\sin\big(\tfrac{\pi(H +1/2)}{2}\big)} - a\, \mathcal E_{1/2}(T,a),
\end{equation}
with $V^\apm_H$ and $C_H$ defined in \eqref{def:V} and \eqref{def:C}, respectively.
\end{proposition}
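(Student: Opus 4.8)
The plan is to reduce the whole statement to the single already-computed quantity $\mathcal J_H(T,a)=\e(X^+_H(\tau_{1/2}(T,a)))$. Writing $\tau:=\tau_{1/2}(T,a)$, I would first unfold the definition of $m^\apm_H$: since $\tilde B^\apm_H=(\ap\tilde X^+_H+\am\tilde X^-_H)/\sqrt{V^\apm_H}$ and, by Corollary~\ref{coro:continuous_modification}, $\tilde X^\pm_H(t)=X^\pm_H(t)$ almost surely for every $t$, linearity of expectation gives
$$m^\apm_H(T,a)=\frac{\ap\,\e(X^+_H(\tau))+\am\,\e(X^-_H(\tau))}{\sqrt{V^\apm_H}}-a\,\mathcal E_{1/2}(T,a),$$
using $\e(\tau)=\mathcal E_{1/2}(T,a)$. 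The first expectation is $\mathcal J_H(T,a)$ by definition, so the entire first assertion comes down to the single identity $\e(X^-_H(\tau))=-\mathcal J_H(T,a)$.

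This identity is the heart of the matter, and I would establish it by a pathwise time reversal rather than by a direct (and messy) conditioning on the post-maximum path. Set $W(u):=B(T-u)-B(T)$, which is again a standard two-sided Brownian motion. Substituting $s=T-u$ in the Wiener integral defining $X^-_H$ — being careful that this substitution produces a sign, $\int_\R f(s)\,{\rm d}B(s)=-\int_\R f(T-u)\,{\rm d}W(u)$ — I expect to obtain the pathwise identity
$$X^-_H(t)=X^+_H(T)[W]-X^+_H(T-t)[W]\qquad\text{for all }t,$$
where $X^+_H(\cdot)[W]$ denotes the functional $X^+_H$ built from $W$; here the reference term $(-u)^{H-1/2}_+$ is recovered by adding and subtracting it, which is exactly what splits the reversed kernel into two copies of the $X^+$-kernel. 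The identity holds for each fixed $t$ almost surely, and extends to all $t$ simultaneously via the continuous modifications of Corollary~\ref{coro:continuous_modification}, so it may be evaluated at the random time $t=\tau$.

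Two facts then close the argument. First, reversing time turns the drift around: a short computation shows $T-\tau=\argmax_{u\in[0,T]}\{W(u)+au\}=\tau_{1/2}(T,-a)$ for the Brownian motion $W$, so that $\e(X^+_H(T-\tau)[W])=\mathcal J_H(T,-a)$. Second, $X^+_H(T)[W]$ is a centred Gaussian variable (a Wiener integral at the deterministic time $T$), hence has mean zero. Taking expectations yields $\e(X^-_H(\tau))=-\mathcal J_H(T,-a)$, and since the explicit formula of Proposition~\ref{prop:Jvalue} is even in $a$ (it depends on $a$ only through $|a|$ and $a^2$), $\mathcal J_H(T,-a)=\mathcal J_H(T,a)$, giving exactly $\e(X^-_H(\tau))=-\mathcal J_H(T,a)$. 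Substituting back produces the claimed formula for $m^\apm_H(T,a)$. For the infinite-horizon case $T=\infty$, $a>0$, where the reversal around $T$ is unavailable, I would instead pass to the limit $T\to\infty$ in the finite-horizon identity, using $\tau_{1/2}(T,a)\to\tau_{1/2}(\infty,a)$ a.s.\ together with $\mathcal J_H(T,a)\to\mathcal J_H(\infty,a)$ and a uniform-integrability argument.

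Finally, for the optimization, since the term $-a\,\mathcal E_{1/2}(T,a)$ does not involve $\apm$ and $\mathcal J_H(T,a)>0$, it remains to maximize $(\ap-\am)/\sqrt{V^\apm_H}$. Using $V^\apm_H=C_H^2[((\ap+\am)\cos\tfrac{\pi(H+1/2)}{2})^2+((\ap-\am)\sin\tfrac{\pi(H+1/2)}{2})^2]$ from \eqref{def:V}, one has $(\ap-\am)^2/V^\apm_H\le 1/(C_H^2\sin^2\tfrac{\pi(H+1/2)}{2})$ with equality exactly when $\ap+\am=0$ (note $\sin\tfrac{\pi(H+1/2)}{2}>0$ for $H\in(0,1)$). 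The bound is attained at $\apm=(1,-1)$, where $\sqrt{V^{(1,-1)}_H}=2C_H\sin\tfrac{\pi(H+1/2)}{2}$ and $\ap-\am=2$, which gives the stated value of $m_H(T,a)$. I expect the main obstacle to be the careful bookkeeping of the reversal identity — in particular the sign in the Wiener-integral substitution and the drift flip — since getting either wrong would spuriously turn the coefficient $\ap-\am$ into $\ap+\am$.
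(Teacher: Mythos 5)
Your proposal is correct, and its skeleton matches the paper's proof exactly: the same decomposition $m^\apm_H=(\ap\,\e X^+_H(\tau)+\am\,\e X^-_H(\tau))/\sqrt{V^\apm_H}-a\mathcal E_{1/2}$, a time-reversal around $T$ to reduce $\e X^-_H(\tau)$ to $\mathcal J_H(T,-a)$, the limit $T\to\infty$ for the infinite-horizon case, and the maximization of $(\ap-\am)/\sqrt{V^\apm_H}$. The genuine difference lies in how the central identity is implemented. The paper proves Lemma~\ref{lem:J_antisymmetric} ($\mathcal J^-_H(T,a)=-\mathcal J^+_H(T,-a)$) at the level of expectations: it splits the PWZ representation of $\tilde X^-_H(\tau)$ at $T$, discards the post-$T$ terms using independence of increments of $B$ and centredness, and only then reverses the drifted path $\hat Y(t)=Y(T-t)-Y(T)$, $\hat\tau=T-\tau$. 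You instead reverse the driving motion itself, $W(u)=B(T-u)-B(T)$, and assert the exact pathwise identity $X^-_H(t)=X^+_H(T)[W]-X^+_H(T-t)[W]$; this checks out — combining the two $X^+$-kernels cancels the reference term $(-u)^{H-1/2}$ and, after the substitution $u=T-s$ with the sign flip you flag, reproduces precisely the kernel of $X^-_H$ — and then only the centredness of the single Wiener integral $X^+_H(T)[W]$ is needed, with the drift flip absorbed into $T-\tau=\argmax_{u\in[0,T]}\{W(u)+au\}$. Your identity is strictly stronger than the paper's lemma and replaces the independence-of-increments argument by a purely deterministic kernel computation; the cost is exactly the bookkeeping you anticipate (the sign in the substitution and passing to continuous modifications via Corollary~\ref{coro:continuous_modification} before evaluating at the random time $\tau$). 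Both routes then need evenness of $\mathcal J_H(T,\cdot)$, which you invoke explicitly from Proposition~\ref{prop:Jvalue} (it depends on $a$ only through $|a|$ and $a^2$); the paper uses this silently when passing from Lemma~\ref{lem:J_antisymmetric} to \eqref{eq:m_formula} (its first display even carries a sign slip, $\ap\mathcal J^+_H-\am\mathcal J^-_H$ where the definition \eqref{def:PWZ} gives $\ap\mathcal J^+_H+\am\mathcal J^-_H$), so making it explicit is an improvement. Your $T\to\infty$ argument (a.s.\ convergence of $\tau_{1/2}(T,a)$ plus uniform integrability) mirrors the paper's use of Lemma~\ref{lem:tau_continuity}(ii) with dominated convergence, and your optimization step, $(\ap-\am)^2/V^\apm_H\le 1/\big(C_H^2\sin^2\tfrac{\pi(H+1/2)}{2}\big)$ with equality iff $\ap+\am=0$, supplies the verification the paper leaves as ``straightforward.''
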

We emphasize the the values of functions $\mathcal J_H(T,a)$ and $\mathcal E_{1/2}(T,a)$ are known explicitly, see Proposition \ref{prop:Jvalue} and Proposition \ref{prop:tau_expectation} respectively. Additionally, we write down the formulas for $m_H(T,a)$ in two special cases below.
\begin{itemize}
\item if $T\in(0,\infty)$, then $\displaystyle m_H(T,0) = \frac{T^H}{\sqrt{2\pi}\,C_HH(H+\tfrac{1}{2})\sin\big(\tfrac{\pi}{2}(H+\tfrac{1}{2})\big)}$;
\item if $a>0$, then $\displaystyle m_H(\infty,a) = \frac{2^{H}\Gamma(H)}{\sqrt{2\pi}C_Ha^{2H}(H+\tfrac{1}{2})\sin\big(\tfrac{\pi}{2}(H+\tfrac{1}{2})\big)} - \frac{1}{2a}$.
\end{itemize}

Interestingly, we can further improve the lower bound derived in Proposition~\ref{prop:mH} simply by using self-similarity of fractional Brownian motion, i.e. for any $\rho>0$ we have
\begin{align*}
\mathscr M_H(T,a) = \rho^{-H}\mathscr M_H(\rho T,\rho^{H-1}a),
\end{align*}
which also holds for $a>0$ and $T=\infty$, i.e. $\mathscr M_H(\infty,a) = \rho^{-H}\mathscr M_H(\infty,\rho^{H-1}a)$. Therefore, we present our final result in the corollary below. In the following, let
\begin{equation}\label{def:underline_MH}
\underline{\mathscr M}_H(T,a) := \sup_{\rho>0}\left\{\rho^{-H} m_H(\rho T,\rho^{H-1}a)\right\}
\end{equation}
\begin{corollary}\label{coro:MH}
For any $H\in(0,1)$, $T>0$, and $a\in\R$ it holds that
\begin{align*}
\mathscr M_H(T,a) \geq \underline{\mathscr M}_H(T,a).
\end{align*}
In particular,
\begin{itemize}
\item[(i)] if $T\in(0,\infty)$, then $\displaystyle \underline{\mathscr M}_H(T,0) = \frac{T^H}{\sqrt{2\pi}\,C_HH(H+\tfrac{1}{2})\sin\big(\tfrac{\pi}{2}(H+\tfrac{1}{2})\big)}$;
\item[(ii)] if $a>0$, then $\displaystyle \underline{\mathscr M}_H(\infty,a) = \frac{1-H}{2aH} \cdot \left(\frac{2^{H+1}a^{1-2H}H\Gamma(H)}{\sqrt{2\pi}C_H(H+\tfrac{1}{2})\sin\big(\tfrac{\pi}{2}(H+\tfrac{1}{2})\big)}\right)^{1/(1-H)}$.
\end{itemize}
\end{corollary}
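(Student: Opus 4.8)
The plan is to combine two facts --- the pointwise lower bound $\mathscr M_H(T,a)\ge m_H(T,a)$ furnished by Proposition~\ref{prop:mH}, and the exact self-similarity of drifted fBm --- to produce a family of lower bounds indexed by $\rho>0$, and then to optimize over $\rho$. First I would record the self-similarity identity. Since $\{B_H(\rho t):t\ge 0\}\eqd\{\rho^H B_H(t):t\ge 0\}$ as processes, substituting $s=\rho t$ in the supremum defining $\mathscr M_H(\rho T,\rho^{H-1}a)$ gives
\begin{align*}
\mathscr M_H(\rho T,\rho^{H-1}a)=\e\sup_{t\in[0,T]}\{B_H(\rho t)-\rho^{H}a t\}=\rho^H\,\mathscr M_H(T,a),
\end{align*}
which is the displayed identity $\mathscr M_H(T,a)=\rho^{-H}\mathscr M_H(\rho T,\rho^{H-1}a)$; the same computation covers the case $T=\infty$, $a>0$. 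Combining it with $\mathscr M_H(\rho T,\rho^{H-1}a)\ge m_H(\rho T,\rho^{H-1}a)$ yields $\mathscr M_H(T,a)\ge\rho^{-H}m_H(\rho T,\rho^{H-1}a)$ for every $\rho>0$, and taking the supremum over $\rho$ gives the main inequality $\mathscr M_H(T,a)\ge\underline{\mathscr M}_H(T,a)$.

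For part~(i) I would observe that the rescaling is inert when $a=0$. Using the closed form $m_H(T,0)=T^H/(\sqrt{2\pi}\,C_H H(H+\tfrac12)\sin(\tfrac{\pi}{2}(H+\tfrac12)))$ recorded after Proposition~\ref{prop:mH}, one has $m_H(\rho T,0)=\rho^H m_H(T,0)$, so $\rho^{-H}m_H(\rho T,0)=m_H(T,0)$ independently of $\rho$; the supremum therefore equals $m_H(T,0)$, which is the asserted value.

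Part~(ii) is where the work lies. Abbreviate $K:=2^H\Gamma(H)/(\sqrt{2\pi}\,C_H(H+\tfrac12)\sin(\tfrac{\pi}{2}(H+\tfrac12)))$, so that $m_H(\infty,a)=Ka^{-2H}-\tfrac{1}{2a}$. Substituting $a\mapsto\rho^{H-1}a$ and multiplying by $\rho^{-H}$, I would reduce $\underline{\mathscr M}_H(\infty,a)$ to maximizing
\begin{align*}
f(\rho)=K\,a^{-2H}\rho^{\,H(1-2H)}-\tfrac{1}{2}a^{-1}\rho^{\,1-2H},\qquad\rho>0.
\end{align*}
The key simplification is the substitution $v=\rho^{\,1-2H}$ (for $H\ne\tfrac12$), which is a bijection of $(0,\infty)$ onto itself and turns $f$ into the standard form $g(v)=Av^H-Bv$ with $A=Ka^{-2H}$ and $B=\tfrac12 a^{-1}$. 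Its maximizer is $v^\ast=(AH/B)^{1/(1-H)}$, and using $A v^{H-1}=B/H$ at the optimum the maximal value equals $\tfrac{1-H}{H}B v^\ast=\tfrac{1-H}{2aH}(2KH a^{1-2H})^{1/(1-H)}$; inserting the value of $K$ gives precisely the formula in~(ii).

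The main obstacle is recognizing the substitution $v=\rho^{\,1-2H}$ that linearizes the competing exponents $H(1-2H)$ and $1-2H$ (they differ by the factor $H$), after which the optimization is elementary; everything else is routine algebra. To finish I would note that $g$ is positive near $0$ and tends to $-\infty$ as $v\to\infty$, so the interior critical point is the global maximum, and that the excluded value $H=\tfrac12$ is recovered directly --- the bound is then scale-invariant and equals $\tfrac{1}{2a}$ --- or by continuity in $H$.
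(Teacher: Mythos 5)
Your proposal is correct and takes essentially the same route the paper intends: the corollary is stated as an immediate consequence of combining the bound $\mathscr M_H(T,a)\ge m_H(T,a)$ from Proposition~\ref{prop:mH} with the self-similarity identity $\mathscr M_H(T,a)=\rho^{-H}\mathscr M_H(\rho T,\rho^{H-1}a)$ and optimizing over $\rho>0$, exactly as you do. The paper leaves the optimization in part (ii) implicit, and your explicit computation --- the substitution $v=\rho^{1-2H}$ reducing the problem to maximizing $Av^H-Bv$, together with the check that the interior critical point is the global maximum and that the degenerate case $H=\tfrac{1}{2}$ is recovered --- correctly fills in that routine step and reproduces the stated formulas.
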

It is noted that $\underline{\mathscr M}_H(T,0) = m_H(T,0)$. In general, the solution to the optimization problem \eqref{def:underline_MH} can be found numerically because the explicit formula for $m_H(T,a)$ is known, cf. Proposition~\ref{prop:mH}.

\subsection{Secondary results}
Before ending this section, we would to present two immediate corollaries, which are implied by our main results. First result describes the asymptotic behavior of $\mathscr M_H(T,0)$, as $H\downarrow0$ while the second result pertains the evaluation of the derivative of the expected supremum $\mathscr M_H(T,a)$ at $H=\tfrac{1}{2}$.

\subsubsection*{Behavior of $\underline{\mathscr M}_H(1,0)$, as $H\downarrow 0$.} 
Using the formula for $\underline{\mathscr M}_H(T,0)$ from Corollary~\ref{coro:MH}(i), it is easy to obtain the following result.
\begin{corollary}\label{coro:smallH}
It holds that
\begin{align*}
\underline{\mathscr M}_H(1,0) \sim \frac{2}{\sqrt{\pi H}}, \quad H\to0.
\end{align*}
\end{corollary}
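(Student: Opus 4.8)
The plan is to substitute $T=1$ into the explicit formula for $\underline{\mathscr M}_H(T,0)$ furnished by Corollary~\ref{coro:MH}(i) and then extract the leading-order behaviour of each factor as $H\downarrow 0$. Since $T^H=1$ when $T=1$, the starting point is
\[
\underline{\mathscr M}_H(1,0) = \frac{1}{\sqrt{2\pi}\,C_H\,H\big(H+\tfrac12\big)\sin\big(\tfrac\pi2(H+\tfrac12)\big)},
\]
so the whole task reduces to finding an asymptotic equivalent of the denominator. My strategy is to regard the denominator as a product of four pieces — the constant $\sqrt{2\pi}$, the normalising factor $C_H$, the polynomial $H(H+\tfrac12)$, and the sine — determine the leading behaviour of each separately, and multiply.

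First I would dispose of the three elementary factors. The factor $\sqrt{2\pi}$ is fixed; since $H+\tfrac12\to\tfrac12$ we have $H(H+\tfrac12)\sim\tfrac12 H$; and by continuity $\sin\big(\tfrac\pi2(H+\tfrac12)\big)\to\sin\tfrac\pi4=\tfrac1{\sqrt2}$. The only factor that requires genuine computation is $C_H$. Using the definition \eqref{def:C},
\[
C_H^2 = \frac{\Gamma(\tfrac12+H)\,\Gamma(2-2H)}{2H\,\Gamma(\tfrac32-H)},
\]
I would evaluate the Gamma functions in the numerator and in $\Gamma(\tfrac32-H)$ at $H=0$, namely $\Gamma(\tfrac12)=\sqrt\pi$, $\Gamma(2)=1$, and $\Gamma(\tfrac32)=\tfrac{\sqrt\pi}{2}$, all finite and nonzero. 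Hence the numerator tends to $\sqrt\pi$ while $\Gamma(\tfrac32-H)\to\tfrac{\sqrt\pi}{2}$, and the explicit $1/(2H)$ furnishes the singular part, giving $C_H^2 \sim \frac{\sqrt\pi}{2H\cdot\sqrt\pi/2}=\frac1H$, that is $C_H\sim H^{-1/2}$.

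Finally I would assemble the pieces. Multiplying the asymptotic equivalents — legitimate because there are finitely many factors, each with a clean leading term — the denominator satisfies
\[
\sqrt{2\pi}\,C_H\,H\big(H+\tfrac12\big)\sin\big(\tfrac\pi2(H+\tfrac12)\big) \sim \sqrt{2\pi}\cdot H^{-1/2}\cdot\tfrac12 H\cdot\tfrac1{\sqrt2} = \frac{\sqrt{\pi H}}{2},
\]
where I have used $\sqrt{2\pi}/\sqrt2=\sqrt\pi$. Taking reciprocals yields $\underline{\mathscr M}_H(1,0)\sim 2/\sqrt{\pi H}$, exactly as claimed. There is no real obstacle here: the result is a direct asymptotic evaluation, and the only point demanding a little care is the cancellation producing $C_H\sim H^{-1/2}$, since it is precisely the $1/(2H)$ singularity in $C_H^2$ that supplies the half-power of $H$ responsible for the final blow-up.
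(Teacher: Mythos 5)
Your proof is correct and follows exactly the route the paper intends: the paper gives no explicit proof of Corollary~\ref{coro:smallH}, stating only that it follows easily from the formula in Corollary~\ref{coro:MH}(i), and your computation --- isolating the $1/(2H)$ singularity in $C_H^2$ to get $C_H\sim H^{-1/2}$, combining it with $H(H+\tfrac12)\sim\tfrac12 H$ and $\sin\big(\tfrac{\pi}{2}(H+\tfrac12)\big)\to\tfrac{1}{\sqrt{2}}$ --- is precisely that omitted verification.
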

The asymptotic lower bound in Corollary~\ref{coro:smallH} is over \emph{5 times larger} than the corresponding bound derived in \cite[Theorem~2.1(i)]{borovkov2017bounds}, where it was shown that $\mathscr M_H(1,0) \geq (4H\pi e\log(2))^{-1/2}$ for all $H\in(0,1)$. Moreover, together with \cite[Corollary~2]{borovkov2018new}, our result implies that
$$ 1.128 \leq H^{-1/2}\mathscr M_H(1,0) \leq 1.695$$
for all $H$ small enough. Determining whether the limit $H^{-1/2}\mathscr M_H(1,0)$, as $H\to0$ exists and finding its value remains an interesting open question.

\subsubsection*{Derivative of the expected supremum}
Using the formula for $\mathcal J_H(T,a)$ in Proposition \ref{prop:Jvalue} and recent findings in \cite{bisewski2021derivatives}, we are able to explicitly evaluate the derivative of the expected supremum $\frac{\partial}{\partial H}\mathscr M_H(T,a)$ at $H=\tfrac{1}{2}$. The proof of \cite[Theorem~1]{bisewski2021derivatives} implies that
\begin{align*}
\mathscr M'_{1/2}(T,a) := \frac{\partial}{\partial H}\mathscr M_H(T,a) \Big\vert_{H=1/2} = \frac{\partial}{\partial H}m^{(1,0)}_H(T,a) \Big\vert_{H=1/2}.
\end{align*}
Therefore, using the formula for $m^{(1,0)}_H(T,a)$ from Proposition \ref{prop:mH}, we find that $\mathscr M'_{1/2}(T,a) = \mathcal J_{1/2}(T,a)+\frac{\partial}{\partial H}\mathcal J_H(T,a)\vert_{H=1/2}$. In the following let $\gamma'(s,x) = \frac{\partial}{\partial s}\gamma(s,x)$.
\begin{corollary}\label{coro:derivative}
It holds that
\begin{align*}
\mathscr M'_{1/2}(T,a) = \frac{1}{\sqrt{\pi}|a|} \left(\log(2a^{-2})\gamma(\tfrac{1}{2},\tfrac{a^2T}{2})+\gamma'(\tfrac{1}{2},\tfrac{a^2T}{2})\right).
\end{align*}
\end{corollary}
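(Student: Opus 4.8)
The plan is to take as given the reduction recorded just before the statement, namely
\[
\mathscr M'_{1/2}(T,a) = \mathcal J_{1/2}(T,a) + \frac{\partial}{\partial H}\mathcal J_H(T,a)\Big\vert_{H=1/2},
\]
and to simply carry out the two evaluations on the right-hand side using the closed form for $\mathcal J_H(T,a)$ supplied by Proposition \ref{prop:Jvalue}. Throughout I would write $z := \tfrac{a^2T}{2}$ and treat the case $a\neq 0$, $T\in(0,\infty)$, for which Proposition \ref{prop:Jvalue} gives $\mathcal J_H(T,a) = \tfrac{2^H}{\sqrt{2\pi}(H+1/2)}\,|a|^{-2H}\gamma(H,z)$. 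The infinite-horizon case $T=\infty$ would follow by the identical computation on the corresponding branch (where $\gamma(H,z)$ is replaced by $\Gamma(H)=\gamma(H,\infty)$), or by continuity.

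First I would compute the boundary term $\mathcal J_{1/2}(T,a)$ by direct substitution $H=\tfrac12$. Since $H+\tfrac12=1$ and $\tfrac{2^{1/2}}{\sqrt{2\pi}}=\tfrac{1}{\sqrt\pi}$, this collapses to $\mathcal J_{1/2}(T,a)=\tfrac{1}{\sqrt\pi|a|}\gamma(\tfrac12,z)$. Next I would differentiate $\mathcal J_H(T,a)$ in $H$ by treating it as a product of the three factors $2^H|a|^{-2H}=\exp\!\big(H\log(2a^{-2})\big)$, $(H+\tfrac12)^{-1}$, and $\gamma(H,z)$, together with the constant $\tfrac{1}{\sqrt{2\pi}}$, and applying the product rule; by definition the $H$-derivative of $\gamma(H,z)$ is exactly $\gamma'(H,z)$. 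Differentiating the exponential factor brings down $\log(2a^{-2})$, while differentiating $(H+\tfrac12)^{-1}$ gives $-(H+\tfrac12)^{-2}$, which equals $-1$ at $H=\tfrac12$. Evaluating all factors and their derivatives at $H=\tfrac12$ and using $\tfrac{2^{1/2}|a|^{-1}}{\sqrt{2\pi}}=\tfrac{1}{\sqrt\pi|a|}$ yields
\[
\frac{\partial}{\partial H}\mathcal J_H(T,a)\Big\vert_{H=1/2}
= \frac{1}{\sqrt\pi|a|}\Big(\log(2a^{-2})\,\gamma(\tfrac12,z) - \gamma(\tfrac12,z) + \gamma'(\tfrac12,z)\Big).
\]

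Finally I would add the two contributions: the term $+\gamma(\tfrac12,z)$ coming from $\mathcal J_{1/2}(T,a)$ cancels exactly the term $-\gamma(\tfrac12,z)$ produced by differentiating the prefactor $(H+\tfrac12)^{-1}$, leaving
\[
\mathscr M'_{1/2}(T,a) = \frac{1}{\sqrt\pi|a|}\Big(\log(2a^{-2})\,\gamma(\tfrac12,z) + \gamma'(\tfrac12,z)\Big),
\]
which is the claimed identity after substituting $z=\tfrac{a^2T}{2}$. The computation is entirely routine, so there is no real analytic obstacle; the differentiability of $\gamma(H,z)$ in $H$ and the interchange of differentiation and expectation underlying the reduction formula are both furnished by \cite{bisewski2021derivatives}. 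The only point requiring a little care is the exact cancellation just described, which is precisely what removes the spurious $-\gamma(\tfrac12,z)$ term and makes the final answer depend only on $\gamma$ and $\gamma'$.
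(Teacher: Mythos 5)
Your proposal is correct and follows exactly the paper's route: the paper likewise takes the reduction $\mathscr M'_{1/2}(T,a) = \mathcal J_{1/2}(T,a)+\frac{\partial}{\partial H}\mathcal J_H(T,a)\big\vert_{H=1/2}$ (stated just before the corollary, via \cite[Theorem~1]{bisewski2021derivatives} and Proposition~\ref{prop:mH}) and then obtains the corollary by differentiating the closed form of $\mathcal J_H(T,a)$ from Proposition~\ref{prop:Jvalue} at $H=\tfrac{1}{2}$. Your product-rule evaluation, including the cancellation of the $\gamma(\tfrac12,\tfrac{a^2T}{2})$ terms from the boundary contribution and the $(H+\tfrac12)^{-1}$ prefactor, is precisely the routine computation the paper leaves implicit.
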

It is noted that the continuous extension of the function $\mathscr M_{1/2}'(T,a)$ to $(T,0)$ and to $(\infty,a)$ agrees with Corollaries 1(i) and 1(ii) from \cite{bisewski2021derivatives}, respectively.

\section{Numerical experiments}\label{s:numerical_experiments}

In this section we compare our theoretical lower bound $\underline{\mathscr M}_H(T,a)$ with Monte Carlo simulations.

In our experiments we use the circulant embedding method (also called Davies-Harte method \cite{davies1987tests}) for the simulation of fBm; see also \cite{dieker2004simulation} for various methods of simulation. Experiments were perfomed in Python and the code of the Davies-Harte method was adapted from \cite[Section~12.4.2]{kroese2015spatial}.
The method relies on the simulation of fBm on an equidistant grid of $n$ points, that is $(B(0), B(\tfrac{T}{n}), B(\tfrac{2T}{n}), \ldots, B(T))$. The resulting estimator has the expected value
\begin{align*}
\mathscr M^n_H(T,a) := \e\left(\sup_{t\in\mathcal T_n} \{B_H(t) - at\}\right),
\end{align*}
where $\mathcal T_n := \{0,\tfrac{T}{n},\tfrac{2T}{n},\ldots,T\}$.  Clearly, $\mathscr M^n_H(T,a) \leq \mathscr M_H(T,a)$, i.e. on average, the Monte Carlo estimator underestimates the ground truth, as the supremum is taken over a finite subset of $[0,T]$. Nonetheless, as $n\to\infty$ we should observe that $\mathscr M^n_H(T,a) \to \mathscr M_H(T,a)$.

In our experiments, we consider three different cases (i) $T=1$, $a=0$, (ii) $T=1$, $a=1$, and finally (iii) $T=\infty$, $a=1$. In each case the theoretical lower bound $\underline{\mathscr M}_H(T,a)$ is compared with the corresponding simulation results $\hat{\mathscr M}^n_H(T,a)$ for $n\in\{2^{10},2^{12},2^{14},2^{16}\}$ based on $20,000$ independent runs of Davies-Harte algorithm; in case (ii) we took $T=10$ because it is not possible to simulate the process over the infinite time horizon.

The results corresponding to cases (i)--(iii) are displayed in Figures \ref{fig:0}--\ref{fig:2}. Each curve is surrounded by its $95\%$ confidence interval depicted as a shaded blue area. The results are presented in two different scales.  We interpret the results on the figures on the left and on the right separately, in the following two paragraphs.

In the figures on the left, we compare the bound with the simulation results on the `high-level' for all $H\in(0.01,1)$. The blue dots at $H=\tfrac{1}{2}$ and $H=1$ correspond to the known values of $\mathscr M_{1/2}(T,a)$ and $\mathscr M_{1}(T,a)$ respectively; the value at $H=1$ in Figure~\ref{fig:2} is not shown because $\mathscr M_1(\infty,1) = \infty$. As expected, the value of $\hat{\mathscr M}_H^n(T,a)$ is increasing, as $n$ is increasing. The simulation results seem to roughly agree with the ground truth at $H=\tfrac{1}{2}$ and $H=1$, while the lower bound, agrees with the ground truth at $H=\tfrac{1}{2}$ by the definition, i.e. $\underline{\mathscr M}_{1/2}(T,a) = \mathscr M_{1/2}(T,a)$. On the `high-level', we can conclude that the lower bound $\underline{\mathscr M}_H(T,a)$ 
\begin{itemize}
\item[-] is close to the simulation results for all $H\approx \tfrac{1}{2}$,
\item[-] perfoms much better than the simulation results as $H\to0$, and
\item[-] performs worse when $H\to1$ (in fact, the bound seems to converge to $0$ there).
\end{itemize}

In the figures on the right, we compare the bound with the simulation results in the region $H\in(0.2,0.8)$. We plotted the relative error between the theoretical lower bound and the simulation results based on $n=2^{16}$ gridpoints, that is $$(\underline{\mathscr M}_H(T,a)-\hat{\mathscr M}_H^n(T,a))/\hat{\mathscr M}_H^n(T,a).$$ It is noted that if relative error is positive, then the theoretical lower bound yields a \emph{better approximation} to the ground truth than the Monte Carlo method, which is indicated by the green area below the curve on the plot. In this sense, we see that the theoretical lower bound outperforms the Monte Carlo simulations for $H\in(0,\tfrac{1}{2})$ in all three cases (i)--(iii). We remark that the value of the relative error at $H=\tfrac{1}{2}$ equals approximately $0.5\%$ in Figure~\ref{fig:0} and $0.7\%$ in Figure~\ref{fig:1}, which roughly agrees with \cite[Corollary~4.3]{bisewski2020zooming}, which states that
$$\mathscr M_{1/2}^n(T,a)-\mathscr M_{1/2}(T,a) \approx  - \sqrt{T} \cdot \frac{\zeta(1/2)}{\sqrt{2\pi n}} \approx -0.5826 \cdot \sqrt{T/n},$$
see also \cite[Theorem~2]{asmussen1995discretization} in case $a=0$.

\begin{figure}[h]
 \begin{adjustwidth}{0cm}{}
        \centering
       \includegraphics[width=.49\textwidth]{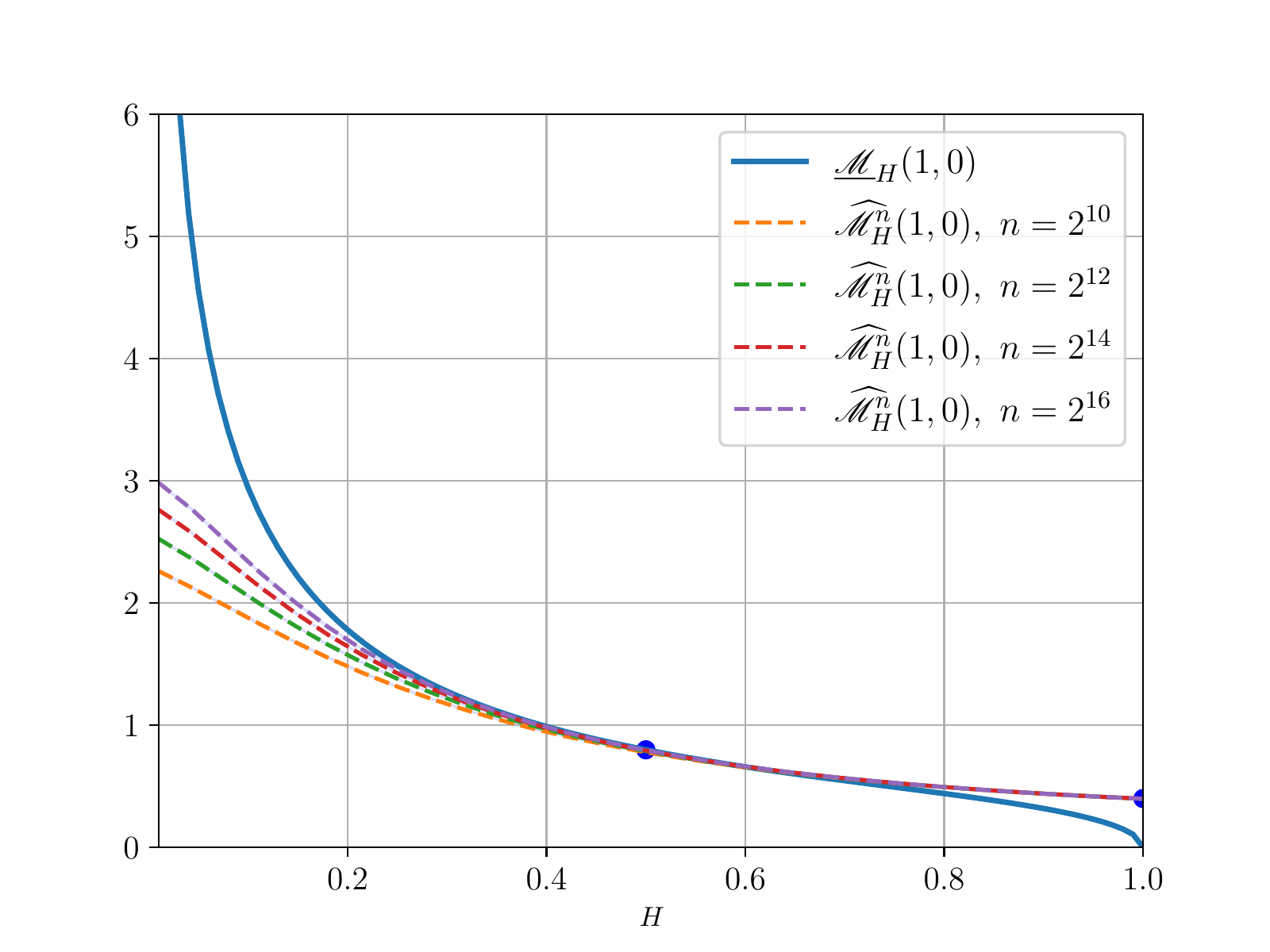}
       \includegraphics[width=.49\textwidth]{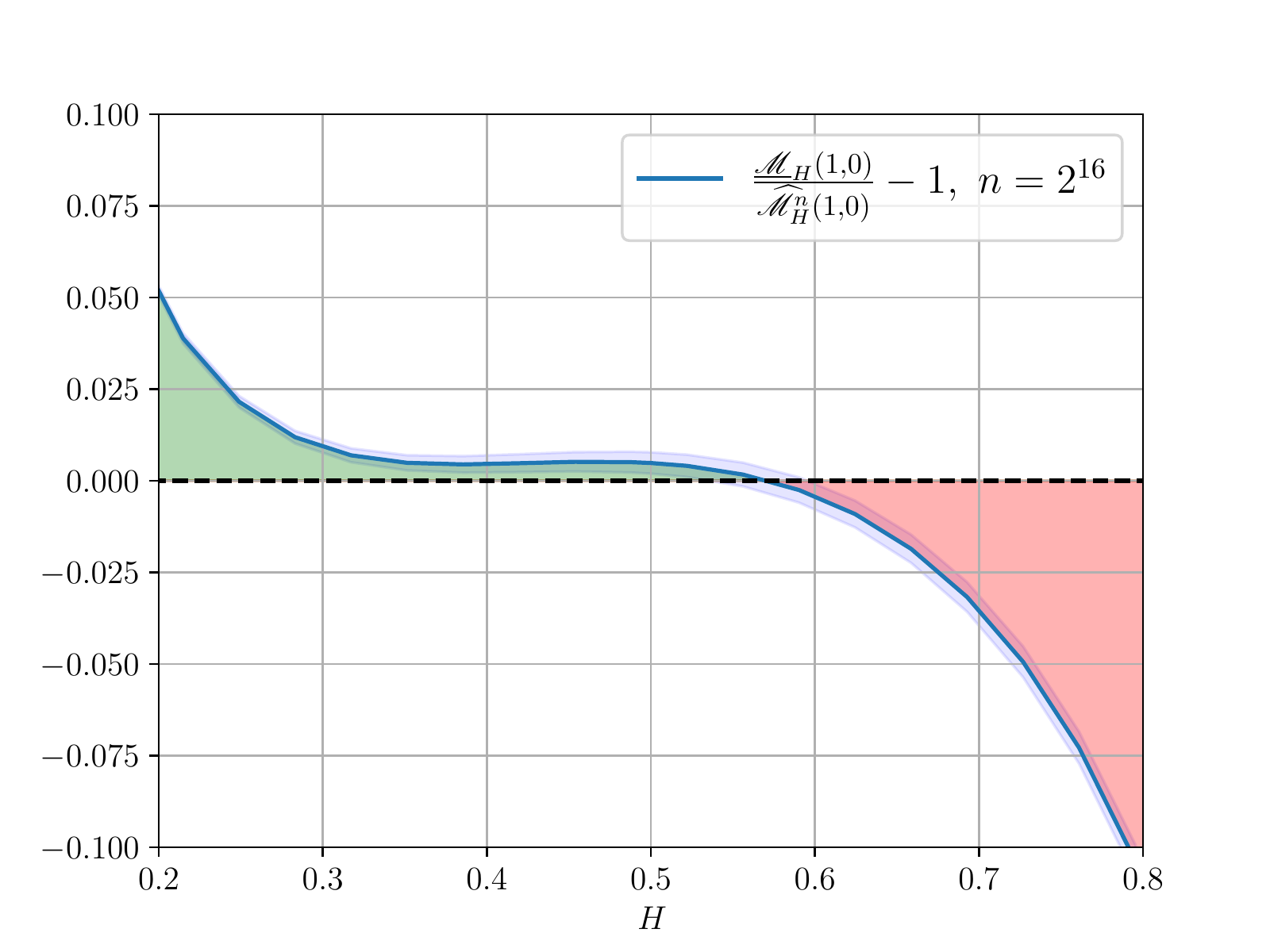}
      \caption{Numerical results in case $T=1, a=0$.
      }\label{fig:0}
       \end{adjustwidth}
\end{figure}

\begin{figure}[h]
 \begin{adjustwidth}{0cm}{}
        \centering
       \includegraphics[width=.49\textwidth]{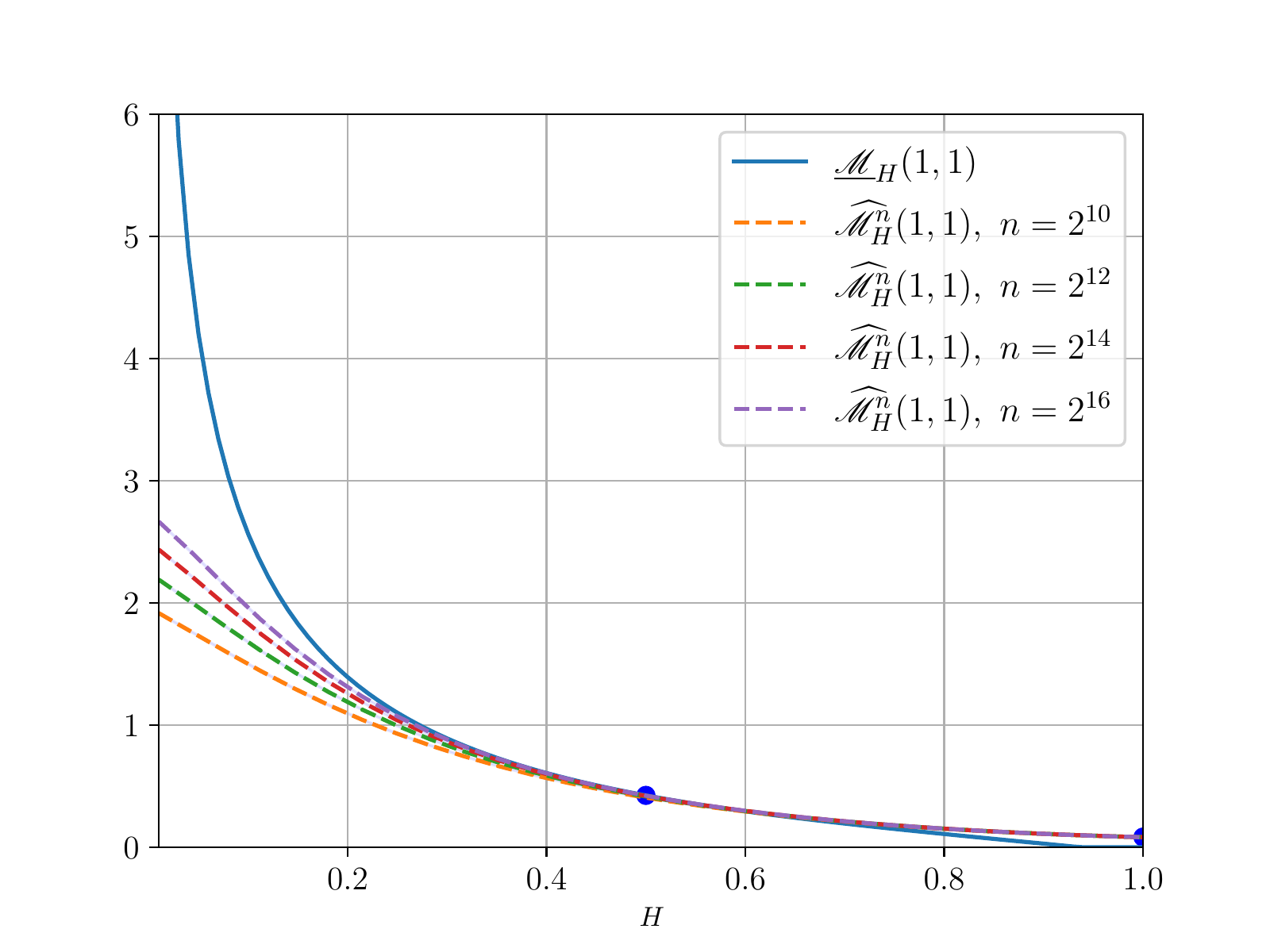}
       \includegraphics[width=.49\textwidth]{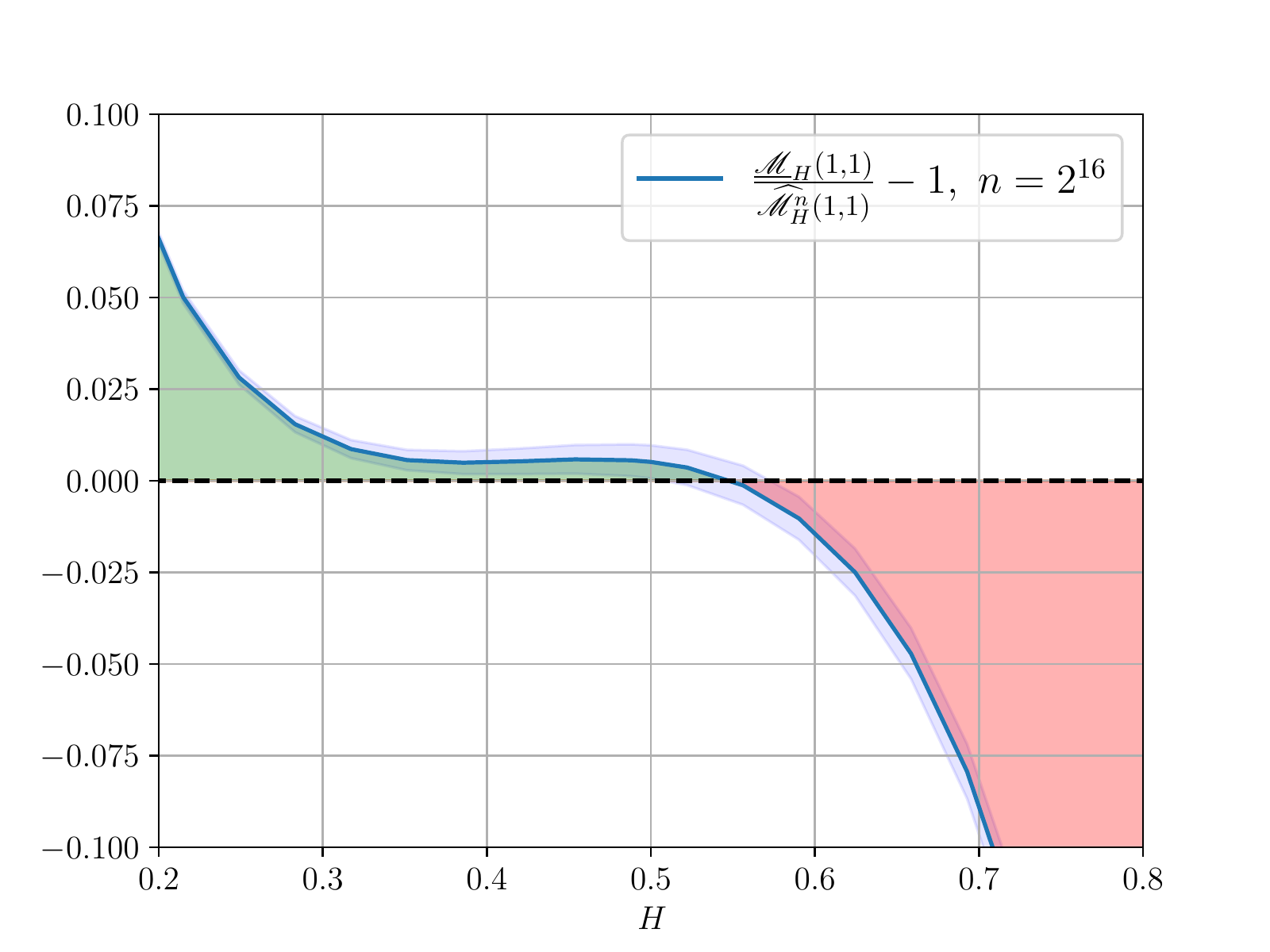}
      \caption{Numerical results in case $T=1, a=1$.
      }\label{fig:1}
       \end{adjustwidth}
\end{figure}

\begin{figure}[h]
 \begin{adjustwidth}{0cm}{}
        \centering
       \includegraphics[width=.49\textwidth]{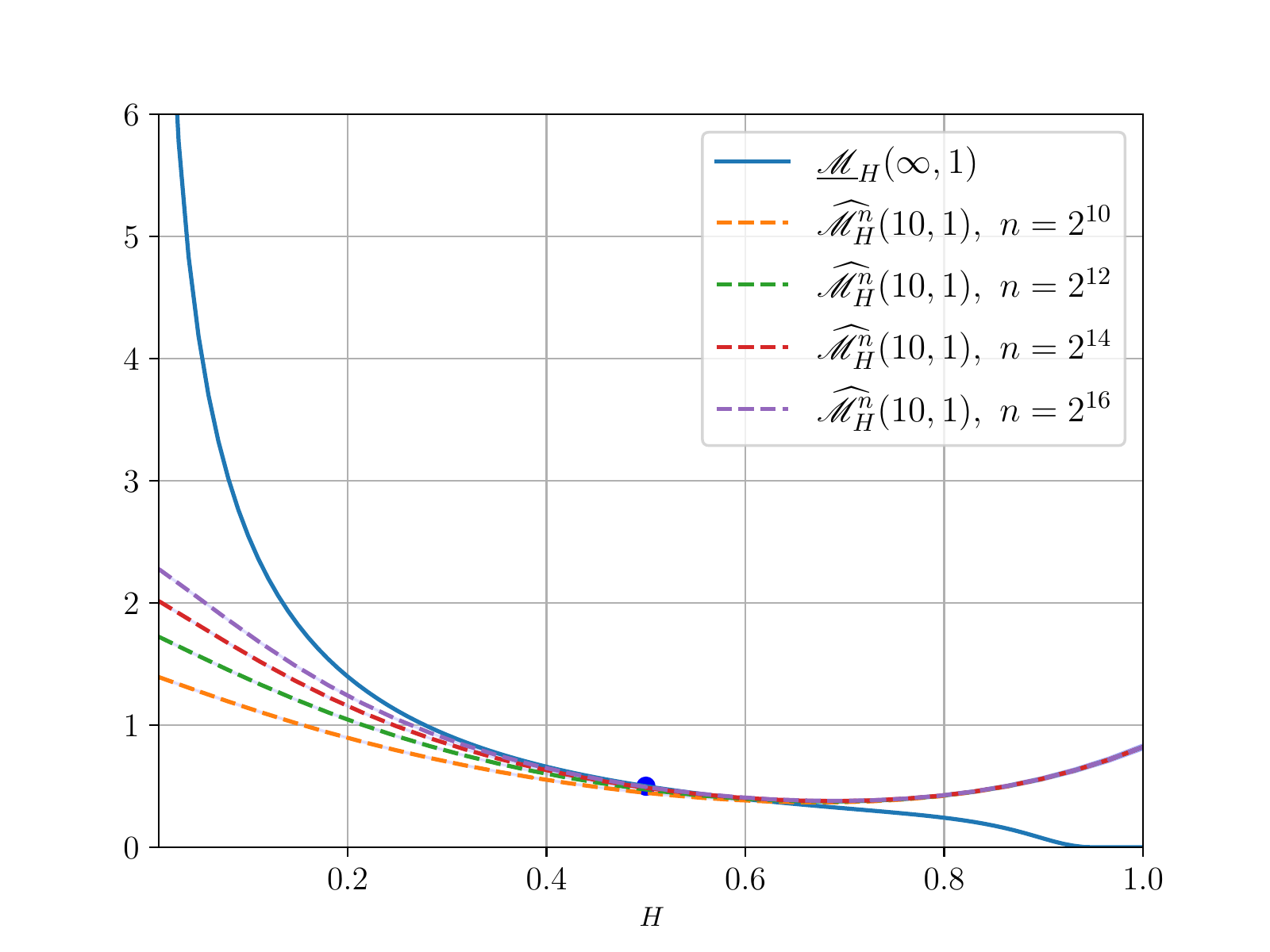}
       \includegraphics[width=.49\textwidth]{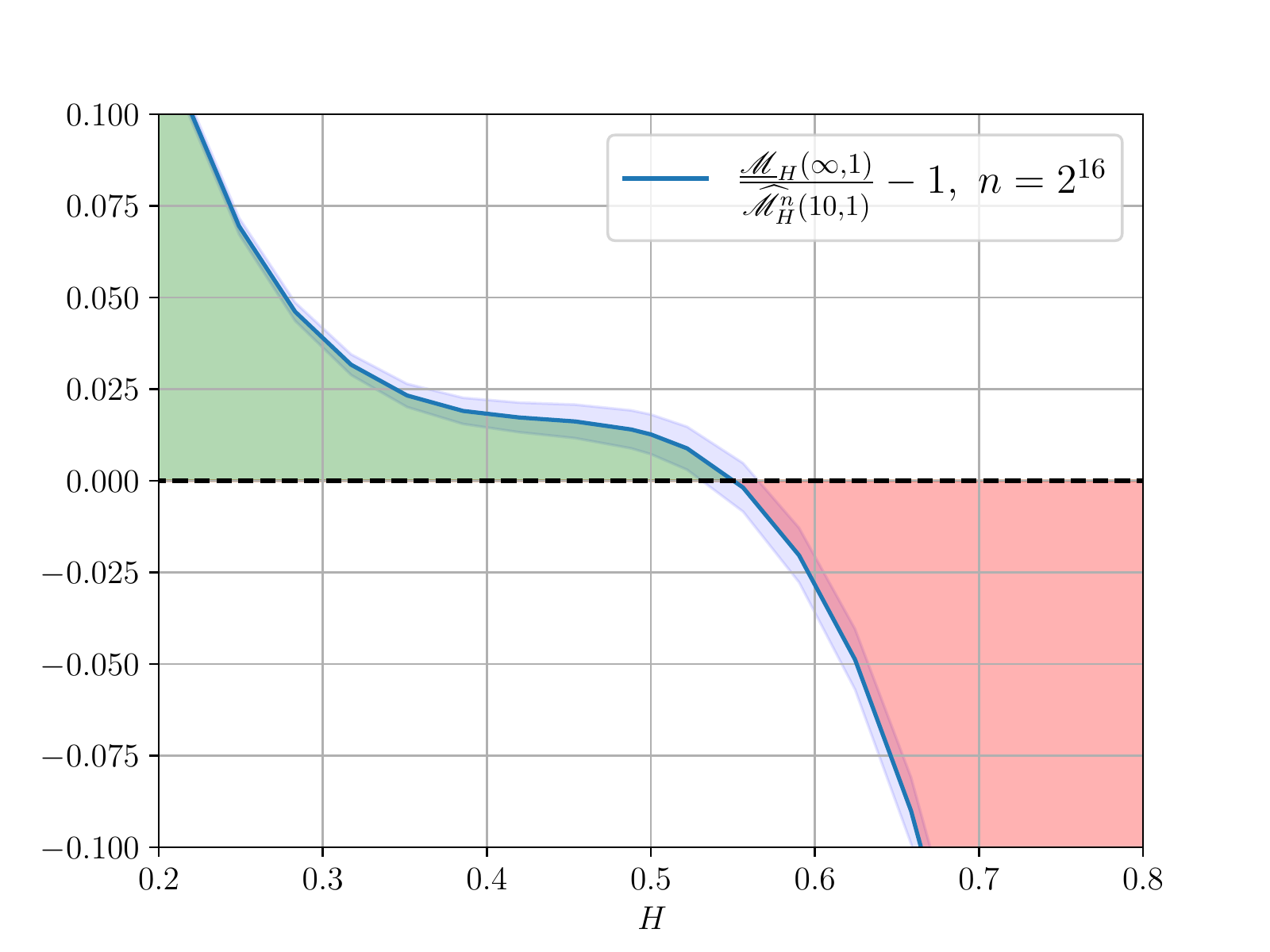}
      \caption{Numerical results in case $T=\infty, a=1$.
      }\label{fig:2}
       \end{adjustwidth}
\end{figure}


\section{Proofs}\label{s:proofs}
In the following, let
\begin{equation}\label{def:J_apm}
\mathcal J^+_H(T,a) := \e\left\{X^+_H(\tau_{1/2}(T,a))\right\}, \quad \mathcal J^-_H(T,a) := \e\left\{X^-_H(\tau_{1/2}(T,a))\right\}.
\end{equation}

\begin{lemma}\label{lem:J_antisymmetric}
If $T>0$ and $a\in\R$ then $\mathcal J^-_H(T,a) =  -\mathcal J^+_H(T,-a).$
\end{lemma}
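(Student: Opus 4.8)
The plan is to exploit the time-reversal symmetry of Brownian motion about the fixed endpoint $T$. I define $W(s):=B(T-s)-B(T)$ for $s\in\R$; since two-sided Brownian motion is invariant under this reflection, $\{W(s):s\in\R\}$ is again a standard two-sided Brownian motion and the map $B\mapsto W$ preserves Wiener measure. The first, elementary step is to record how the location of the maximum transforms: substituting $t=T-s$ in $\sup_{t\in[0,T]}\{B(t)-at\}$ shows that $\tau_{1/2}(T,a)[B]=T-\sigma$, where $\sigma:=\tau_{1/2}(T,-a)[W]$. In other words, reversal flips the sign of the drift, which is exactly the feature the statement requires.

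Next I would transport the stochastic integral defining $X^-_H$ to the reversed motion. Using the substitution rule $\int_\R g(v)\,{\rm d}B(v)=-\int_\R g(T-s)\,{\rm d}W(s)$ (verified on indicators and extended by the $L^2$-isometry) and writing $X^-_H(t)=-\int_0^\infty h_t(v)\,{\rm d}B(v)$ with the kernel $h_t(v)=v^{H-\tfrac{1}{2}}$ on $(0,t)$ and $h_t(v)=v^{H-\tfrac{1}{2}}-(v-t)^{H-\tfrac{1}{2}}$ on $(t,\infty)$, a direct change of variables gives, for each fixed $t\in[0,T]$ and with $\sigma:=T-t$, the pathwise identity
\[
X^-_H(t)[B]=\int_{-\infty}^{\sigma}\big[(T-s)^{H-\tfrac{1}{2}}-(\sigma-s)^{H-\tfrac{1}{2}}\big]\,{\rm d}W(s)+\int_{\sigma}^{T}(T-s)^{H-\tfrac{1}{2}}\,{\rm d}W(s)=:\Xi_\sigma[W].
\]
Because both sides admit continuous modifications in $t$ (Proposition~\ref{prop:continuous_modification_pm}), this identity holds simultaneously for all $t$ on a single probability-one event, so I may legitimately substitute the random time $t=\tau_{1/2}(T,a)[B]$, i.e. $\sigma=\tau_{1/2}(T,-a)[W]$.

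The crux is then a cancellation. Adding the kernel of $X^+_H(\sigma)[W]$ to that of $\Xi_\sigma[W]$ and collecting terms region by region in $s$ (namely $s<0$, $0<s<\sigma$, $\sigma<s<T$), every occurrence of $(\sigma-s)^{H-\tfrac{1}{2}}$ cancels and all dependence on the random time $\sigma$ disappears, leaving exactly
\[
\begin{aligned}
\Xi_\sigma[W]+X^+_H(\sigma)[W]
&=\int_{-\infty}^0\big[(T-s)^{H-\tfrac{1}{2}}-(-s)^{H-\tfrac{1}{2}}\big]\,{\rm d}W(s)+\int_0^T(T-s)^{H-\tfrac{1}{2}}\,{\rm d}W(s)\\
&=X^+_H(T)[W],
\end{aligned}
\]
the value of $X^+_H$ at the \emph{deterministic} time $T$. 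Since $X^+_H(T)[W]$ is a Wiener integral of a fixed $L^2$ kernel against the Brownian motion $W$, it is centred, so $\e\,X^+_H(T)[W]=0$.

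To conclude I take expectations. The pathwise identity gives $\mathcal J^-_H(T,a)=\e\,\Xi_\sigma[W]$, while measure-preservation of $B\mapsto W$ together with $\sigma=\tau_{1/2}(T,-a)[W]$ yields $\e\,X^+_H(\sigma)[W]=\mathcal J^+_H(T,-a)$; both random variables are finite-variance Gaussians (their variances are bounded by $V^{(1,0)}_H T^{2H}$), so the sum may be split. Combining with the previous display gives $\mathcal J^-_H(T,a)+\mathcal J^+_H(T,-a)=\e\,X^+_H(T)[W]=0$, which is the claim. I expect the main difficulty to be technical rather than conceptual: justifying the change of variables for the stochastic integral with the correct sign and, above all, legitimising the substitution of the random time $\sigma$ into an identity between Wiener integrals, both handled via the continuous modifications of $X^\pm_H$. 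The driftless case $a=0$ is covered verbatim, and the infinite-horizon regime follows by letting $T\to\infty$.
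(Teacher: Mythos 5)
Your proof is correct, and it implements the same underlying time-reversal symmetry as the paper by a genuinely different route. The paper works at the level of the PWZ representation \eqref{eq:Xtilde_pm}: it expands $\tilde X^-_H(\tau)$, discards the terms whose expectation vanishes (using that $B$ is centred and that $\tau$ is independent of $\{B(T+s)-B(T):s>0\}$), and recognizes the surviving expression as the drift-$(-a)$ analogue of the expansion of $\tilde X^+_H(\tau)$ under the reversed drifted path $\hat Y(t)=Y(T-t)-Y(T)$, with $\hat\tau=T-\tau$. You instead reverse the driving noise itself, $W(s)=B(T-s)-B(T)$ on all of $\R$, and establish the exact pathwise kernel identity $X^-_H(t)[B]=X^+_H(T)[W]-X^+_H(T-t)[W]$: the terms the paper discards after taking expectations are, in your argument, assembled exactly into the centred Wiener integral $X^+_H(T)[W]$ at the deterministic time $T$. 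Your route buys a cleaner structure --- a single algebraic identity, upgraded via the continuous modifications of Proposition~\ref{prop:continuous_modification_pm} to hold simultaneously for all $t$, into which the random time may legitimately be substituted --- while the paper's route avoids the change-of-variables step for Wiener integrals and stays entirely within the representation it has already built. I verified your kernel bookkeeping region by region ($s<0$, $0<s<\sigma$, $\sigma<s<T$) and the drift-flip $\tau_{1/2}(T,a)[B]=T-\tau_{1/2}(T,-a)[W]$; both are correct.

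One small repair is needed in your integrability step: $X^+_H(\sigma)[W]$ and $\Xi_\sigma[W]$ are \emph{not} Gaussian, since $\sigma$ is a random time, so ``finite-variance Gaussians with variance bounded by $V^{(1,0)}_HT^{2H}$'' is not a valid reason to split the expectation. The conclusion survives by a standard domination: almost surely $|X^+_H(\sigma)[W]|\leq\sup_{u\in[0,T]}|\tilde X^+_H(u)[W]|$, which is integrable because $\tilde X^+_H$ is a continuous Gaussian process on a compact interval (Borell--TIS), and likewise for $\Xi_\sigma[W]$; this matches the integrability the paper's own proof uses implicitly. Finally, the lemma is stated for finite $T$ only, so your closing remark about $T\to\infty$ is unnecessary here --- the paper treats the infinite-horizon extension separately, in the proof of Proposition~\ref{prop:mH}.
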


\begin{proof}[Proof of Lemma \ref{lem:J_antisymmetric}]
For breviety we write $\tau:=\tau(T,a)$ and put $Y(t):=B(t)-at$. See that according to the PWZ representation of $\tilde X^+_H$ in Eq.~\eqref{eq:Xtilde_pm}, we have
\begin{align}
\nonumber \e\left(\tilde X^+_H(\tau)\right) = \e\left( \tau^{H-\tfrac{1}{2}}B(\tau) -(H-\tfrac{1}{2}) \cdot\int_0^\tau (\tau-s)^{H-\tfrac{3}{2}}\cdot\big(B(\tau)-B(s)\big)\,{\rm d}s\right) \\
\label{eq:Xplus_exp} = \e\left( \tau^{H-\tfrac{1}{2}}Y(\tau) + \frac{a\tau^{H+1/2}}{H+1/2} -(H-\tfrac{1}{2}) \cdot\int_0^\tau (\tau-s)^{H-\tfrac{3}{2}}\cdot\big(Y(\tau)-Y(s)\big)\,{\rm d}s\right),
\end{align}
where in the second line we simply substituted $B(t) = Y(t)+at$. Furthermore, notice that for any $T\in(0,\infty)$, the PWZ representation of $\tilde X_H^-(\tau)$ from Eq.~\eqref{eq:Xtilde_pm} can be rewritten as 
\begin{align*}
\tilde X_H^-(\tau) & := -(T-\tau)^{H-1/2}\big(B(\tau)-B(T)\big) + (H-\tfrac{1}{2})\cdot\int_\tau^T(s-\tau)^{H-3/2}(B(\tau)-B(s)){\rm d}s\\
& + (H-\tfrac{1}{2})\cdot\int_0^T s^{H-3/2}B(s){\rm d}s - T^{H-1/2}B(T) \\
& + (H-\tfrac{1}{2})\cdot\int_T^\infty \left[s^{H-3/2} - (s-\tau)^{H-3/2}\right] \cdot\big(B(s)-B(T)\big)\,{\rm d}s.
\end{align*}
Since Brownian motion is centred and it has independent increments, therefore $\tau$ is independent of $\{B(T+s)-B(T): s>0\}$ and the expected value of each term in the second and third line above is equal to $0$. This yields
\begin{align*}
\e\left(\tilde X_H^-(\tau)\right) = -\e\left((T-\tau)^{H-1/2}\big(B(\tau)-B(T)\big) - (H-\tfrac{1}{2})\cdot\int_\tau^T(s-\tau)^{H-3/2}(B(\tau)-B(s)){\rm d}s\right).
\end{align*}
After substituting $B(t)=Y(t)+at$ we find that the above equals
\begin{align*}
-\e\left((T-\tau)^{H-\tfrac{1}{2}}\big(Y(\tau)-Y(T)\big) - \frac{a(T-\tau)^{H+\tfrac{1}{2}}}{H+\tfrac{1}{2}} - (H-\tfrac{1}{2})\cdot\int_\tau^T(s-\tau)^{H-\tfrac{3}{2}}(Y(\tau)-Y(s)){\rm d}s\right).
\end{align*}
Now, let $\{\hat Y(t):t\in[0,T]\}$ be the time-reverse of process $Y$, that is $\hat Y(t):=Y(T-t)-Y(T)$ and let $\hat\tau:=\argmax\{t\in[0,T]:\hat Y(t)\}$ be its time of the supremum over $[0,T]$. Notice that we must have $\tau^*= T-\tau$. After substituting for $\hat Y$, we find that
\begin{align*}
\e\left(\tilde X_H^-(\tau)\right) = -\e\left(\hat\tau^{H-\tfrac{1}{2}}\hat Y(\tau) - \frac{a\hat\tau^{H+\tfrac{1}{2}}}{H+\tfrac{1}{2}} - (H-\tfrac{1}{2})\cdot\int_0^{\hat\tau}(\hat\tau-s)^{H-\tfrac{3}{2}}(\hat Y(\tau)-\hat Y(s)){\rm d}s\right).
\end{align*}
Finally, we notice that $\hat Y(\cdot) \eqd Y(\cdot;-a)$, that is $\hat Y$ is has the law of drifted Brownian motion with drift $-a$. Comparing the above with Eq.~\eqref{eq:Xplus_exp} for $\e\tilde X^+_H(\tau)$ concludes the proof.
\end{proof}

In light of the result in Lemma \ref{lem:J_antisymmetric} the function $\mathcal J^-_H(T,a)$ can be expressed using $\mathcal J^+_H(T,a)$, which justifies our notation $\mathcal J_H(T,a) := \mathcal J^+_H(T,a)$, cf. \eqref{def:J}. Before proving Proposition \ref{prop:Jvalue}, we need to establish a certain continuity property of the argmax functional of Brownian motion. In what follows, let $$\tau_{1/2}(T,a) := \argmax_{t\in[0,T]} \{B(t) - at\}$$ be the argmax functional of drifted Brownian motion, see also the definition \eqref{def:M_and_tau}.

\begin{lemma}\label{lem:tau_continuity}
It holds that
\begin{itemize}
\item[(i)] $\displaystyle \lim_{a\to a^*}\tau_{1/2}(T,a) = \tau_{1/2}(T,a^*)$ a.s. for any $T\in(0,\infty)$, $a^*\in\R$ and
\item[(ii)] $\displaystyle \lim_{T\to \infty} \tau_{1/2}(T,a) = \tau_{1/2}(\infty,a)$ a.s. for any $a>0$.
\end{itemize}
\end{lemma}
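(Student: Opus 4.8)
The plan is to reduce both parts to a single deterministic continuity property of the $\argmax$ functional, combined with the almost-sure uniqueness of the location of the maximum of drifted Brownian motion. Uniqueness is the only genuinely probabilistic ingredient: for each fixed $T\in(0,\infty)$ and $a\in\R$, the path $t\mapsto B(t)-at$ attains its supremum over $[0,T]$ at a unique point almost surely. This is classical, and for our purposes it is already implicit in the existence of the joint density \eqref{eq:density_T} (and \eqref{eq:density_infty} for the infinite-horizon case), since a continuous joint law for $(\tau_{1/2},M_{1/2})$ presupposes that the maximizer is well defined. If a self-contained argument is preferred, one notes that for rational $q\in(0,T)$ the difference $\max_{[q,T]}\{B(t)-at\}-\max_{[0,q]}\{B(t)-at\}$ splits into an $\mathcal F_q$-measurable term plus the running maximum of the post-$q$ increments, the latter being absolutely continuous and independent of $\mathcal F_q$; hence it vanishes with probability zero, and a union bound over rational $q$ excludes two distinct maximizers. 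Denote by $N_{T,a}$ the null event on which the maximizer is not unique.

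For part (i), fix $a^*\in\R$ and work on the complement of $N_{T,a^*}$, so that $f_{a^*}(t):=B(t)-a^*t$ has a unique maximizer $x^*=\tau_{1/2}(T,a^*)$ on $[0,T]$. For any $a$, the path $f_a(t):=B(t)-at$ satisfies the uniform estimate $\sup_{t\in[0,T]}|f_a(t)-f_{a^*}(t)|=|a-a^*|\,T\to0$ as $a\to a^*$. I would then invoke the deterministic argmax-continuity lemma: if continuous functions $g_n$ converge uniformly on a compact set to a function $g$ with a unique maximizer $x^*$, then every sequence of maximizers of $g_n$ converges to $x^*$. Its proof is short: a subsequential limit $\bar x$ of maximizers $x_n$ satisfies $g(\bar x)\ge g(x^*)$ upon passing to the limit in $g_n(x_n)\ge g_n(x^*)$ using uniform convergence and continuity, which forces $\bar x=x^*$ by uniqueness. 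Applying the lemma along an arbitrary sequence $a_n\to a^*$ (with $g_n=f_{a_n}$, $g=f_{a^*}$) yields $\tau_{1/2}(T,a_n)\to x^*$, and since the sequence was arbitrary the continuous limit $\lim_{a\to a^*}\tau_{1/2}(T,a)=\tau_{1/2}(T,a^*)$ follows on this event of probability one. Note that only uniqueness at $a^*$ is used, so the exceptional set is the single null event $N_{T,a^*}$ and no uncountable union over $a$ arises.

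For part (ii), fix $a>0$. By the strong law for Brownian motion, $B(t)/t\to0$ a.s., hence $B(t)-at=t\,(B(t)/t-a)\to-\infty$ a.s.; in particular the global supremum $M:=\sup_{t\ge0}\{B(t)-at\}$ is finite and attained at an a.s.-finite, a.s.-unique point $\tau_\infty=\tau_{1/2}(\infty,a)$ (uniqueness as above, now over $[0,\infty)$, consistent with \eqref{eq:density_infty}). For any $T>\tau_\infty$ we have $\tau_\infty\in[0,T]$, so $\sup_{t\in[0,T]}\{B(t)-at\}=M$ is achieved at $\tau_\infty$, and uniqueness of the global maximizer forces $\tau_\infty$ to be the only maximizer in $[0,T]$; thus $\tau_{1/2}(T,a)=\tau_\infty$ for every $T>\tau_\infty$. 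Hence $T\mapsto\tau_{1/2}(T,a)$ is eventually constant and equal to $\tau_\infty$, which gives (ii).

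The main obstacle is the almost-sure uniqueness of the maximizer together with, for part (i), the interplay between the continuous parameter $a$ and the choice of null set. I would take care to state the argmax-continuity lemma in the form that asserts shrinking of the entire $\argmax$ set to $\{x^*\}$, so that the particular measurable selection defining $\tau_{1/2}$ is irrelevant, and to record explicitly that uniqueness is required only at the limiting parameter value. The remaining ingredients — uniform convergence of the drifted paths and finiteness of the global maximum via the strong law — are immediate.
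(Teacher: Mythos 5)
Your proof is correct and follows essentially the same route as the paper: part (i) via uniform convergence of the drifted paths $t\mapsto B(t)-at$ combined with continuity of the argmax functional at a limit function with a unique maximizer (the paper cites \cite[Lemma~2.9]{Seijo2011A} for exactly this, while you prove it inline and also make the a.s.\ uniqueness of the maximizer explicit), and part (ii) via the a.s.\ finiteness of $\tau_{1/2}(\infty,a)$ forcing $\tau_{1/2}(T,a)=\tau_{1/2}(\infty,a)$ for all sufficiently large $T$. The extra details you supply (the rational-$q$ uniqueness argument and the strong-law justification of finiteness) are sound and simply fill in steps the paper leaves implicit.
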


\begin{proof}
Let $Y(t;a) := B(t)-at$. It is easy to see that the trajectories $\{Y(t,a):t\in[0,T]\}$ converge uniformly to $\{Y(t,a^*):t\in[0,T]\}$, as $a\to a^*$, hence the argmax functionals also converge, see e.g. \cite[Lemma~2.9]{Seijo2011A}, which concludes the proof of item (i). Furthermore, since $\tau_{1/2}(\infty,a)$ is almost surely finite, then there must exist some (random) $T_0>0$ such that $\tau_{1/2}(T,a) = \tau_{1/2}(\infty,a)$ for all $T>T_0$, which implies item (ii).
\end{proof}

\begin{proof}[Proof of Proposition~\ref{prop:Jvalue}]
Let $H=\tfrac{1}{2}$ and observe that
\begin{align*}
\mathscr M_{1/2}(T,a) & =  \e \left(M^{(1,0)}_{1/2}(T,a)\right) \\
& = \e \left(B^{(1,0)}_{1/2}(\tau_{1/2}(T,a)) - a\tau_{1/2}(T,a)\right)\\
& = \frac{\e\left(\tilde X^+_{1/2}(\tau(T,a))\right)}{\sqrt{V^{(1,0)}_{1/2}}} - a\e\left(\tau_{1/2}(T,a)\right) \\
& = \mathcal J_{1/2}(T,a) - a\mathcal E_{1/2}(T,a),
\end{align*}
therefore $\mathcal J_{1/2}(T,a) = \mathscr M_{1/2}(T,a) + a\mathcal E_{1/2}(T,a)$, which agrees with Proposition~\ref{prop:tau_expectation} (note that the error function is a special case of the incomplete Gamma function, cf. \eqref{eq:erf_kummer}).

Till this end, let $H\in(0,\tfrac{1}{2})\cup(\tfrac{1}{2},1)$. For breviety denote $\tau := \tau(T,a)$ and let $Y(t) = B(t)-at$. Till this end, consider the case $T\in(0,\infty),a\neq0$. Recall that from \eqref{eq:Xplus_exp} we have
\begin{equation*}
\mathcal J_H(T,a) = \e\left( \tau^{H-\tfrac{1}{2}}Y(\tau) + \frac{a\tau^{H+\tfrac{1}{2}}}{H+\tfrac{1}{2}} -(H-\tfrac{1}{2}) \cdot\int_0^\tau (\tau-s)^{H-\tfrac{3}{2}}\cdot\big(Y(\tau)-Y(s)\big)\,{\rm d}s\right).
\end{equation*}
Now, we have
\begin{align*}
&\e\left(\int_0^\tau (\tau-s)^{H-\tfrac{3}{2}}\cdot\big(Y(\tau)-Y(s)\big)\,{\rm d}s\right) \\
& \qquad\qquad= \e\left(\e\left(\int_0^t (t-s)^{H-\tfrac{3}{2}}\cdot\big(Y(t)-Y(s)\big)\,{\rm d}s\mid \tau=t,Y(\tau)=y\right)\right),
\end{align*}
We now recognize that the above equals to $\e(I_H(\tau,Y(\tau))$, with $I_H(t,y)$ defined in Eq.~\eqref{def:I}. Using Lemma~\ref{lem:IH} we obtain
\begin{equation*}
\mathcal J_H(T,a) = \mathcal J_H^{(1)}(T,a) + \mathcal J_H^{(2)}(T,a),
\end{equation*}
where
\begin{align*}
\mathcal J_H^{(1)}(T,a) & := \frac{1}{H+\tfrac{1}{2}}\cdot \e \left(\tau^{H-\tfrac{1}{2}}\left(Y(\tau) + a\tau - \frac{\tau}{Y(\tau)}\right)\right) \\
\mathcal J_H^{(2)}(T,a) & := \frac{1}{H+\tfrac{1}{2}} \cdot
\e\left(\frac{\tau^{H+\tfrac{1}{2}}}{Y(\tau)} \cdot \frac{\Gamma(H)}{\sqrt{\pi}}U\left(H-\frac{1}{2},\frac{1}{2},\frac{Y^2(\tau)}{2\tau}\right)\right).
\end{align*}
The joint density of the pair $(\tau,Y(\tau))$ is well-known, see \eqref{eq:density_T} in Section \ref{s:joint_density} therefore both functions $\mathcal J_H^{(1)}(T,a)$ and $\mathcal J_H^{(2)}(T,a)$ can be written as definite integrals and calculated. In fact, we have
\begin{equation}\label{eq:proof_calculation_Jvalue}
\mathcal J_H^{(1)}(T,a) = 0, \qquad \mathcal J_H^{(2)}(T,a) = \frac{2^{H}}{\sqrt{2\pi}(H+\tfrac{1}{2})} \cdot |a|^{-2H}\gamma(H,\tfrac{a^2T}{2}).
\end{equation}
The derivation of Eq.~\eqref{eq:proof_calculation_Jvalue} is purely calculational and is provided in Appendix \ref{appendix:calculations}. This ends the proof in case $T\in(0,\infty)$, $a\neq 0$.

In order to derive the formula for $\mathcal J_H(T,a)$ in the remaining two cases (i.e. $T\in(0,\infty)$, $a=0$ and $T=\infty$, $a>0$), we could redo the calculations in Eq.~\eqref{eq:proof_calculation_Jvalue} with appropriate density functions for the pair $(\tau,Y(\tau))$, see \eqref{eq:density_infty} and \eqref{eq:density_T}. However, it is not necessary, as it suffices to show that the function $\mathcal J_H(T,a)$ is continuous at $(\infty,a)$ and $(T,0)$.
 
Let $T\in(0,\infty)$, $a=0$. Using the fact that $\gamma(s,x) \sim (sx^s)^{-1}$, as $x\downarrow0$, it can be seen that
\begin{align*}
\lim_{a\to0}\mathcal J_H(T,a) = \frac{T^{H}}{\sqrt{2\pi}H(H+\tfrac{1}{2})}.
\end{align*}
Showing that $\lim_{a\to0}\mathcal J_H(T,a)  = \mathcal J_H(T,0)$ would therefore conclude the proof in this case. By the definition, we have $\mathcal J_H(T,a) = \sqrt{V(H)} \cdot \e (B^{(1,0)}_H(\tau(T,a)))$. Since $V(H)$ is continuous at $H=\tfrac{1}{2}$, therefore it suffices to show
\begin{equation}\label{eq:to_show_prop_a_zero}
\e (B^{(1,0)}_H(\tau(T,a))) \to \e (B^{(1,0)}_H(\tau(T,0))),
\end{equation}
as $a\to 0$. Using Proposition \ref{prop:continuous_modification_pm} and Lemma \ref{lem:tau_continuity}(i) we obtain
\begin{align*}
\lim_{a\to 0} B_H^{(1,0)}(\tau(T,a)) = B_H^{(1,0)}(\tau(T,0)) \quad \text{a.s.}
\end{align*}
Moreover, for any $\ep>0$ and all $a\in(-\ep,\ep)$ we have \[B_H^{(1,0)}(\tau(T,a)) \leq \sup_{t\in[0,T]}B^{(1,0)}_H(t) + \ep T,\]
which has finite expectation. Therefore, by Lebesgue dominated convergence we can conclude that the limit \eqref{eq:to_show_prop_a_zero} holds, which ends the proof in this case.

Let $T=\infty$, $a>0$. It is easy to see that 
\begin{align*}
\lim_{T\to\infty}\mathcal J_H(T,a) = \frac{2^{H}\Gamma(H)}{\sqrt{2\pi}(H+\tfrac{1}{2})} \cdot |a|^{-2H}, \quad \text{if} \ a>0.
\end{align*}
Analogously to the proof of the previous case, it suffices to show
\begin{equation}\label{eq:to_show_prop_T_infty}
\e (B^{(1,0)}_H(\tau(T,a))) \to \e (B^{(1,0)}_H(\tau(\infty,a))),
\end{equation}
as $T\to\infty$. Using Proposition \ref{prop:continuous_modification_pm} and Lemma \ref{lem:tau_continuity}(ii) we find that
\begin{align*}
\lim_{T\to \infty} B_H^{(1,0)}(\tau(T,a)) = B_H^{(1,0)}(\tau(\infty,a)) \quad \text{a.s.}
\end{align*}
Moreover, since the mapping $T\mapsto\tau(T,a)$ is nondecreasing, it holds that
\[B_H^{(1,0)}(\tau(T,a)) \leq M^{(1,0)}_H(\infty,a)+ a\tau(\infty,a),\]
and the right-hand-side above has a finite expectation. Using Lebesgue dominated convergence we conclude that the limit \eqref{eq:to_show_prop_T_infty} holds, which ends the proof.
\end{proof}

\begin{proof}[Proof of Proposition~\ref{prop:mH}]
From the definition of $m^\apm_H(T,a)$ in \eqref{def:mHapm}, 
\begin{align*}
m^\apm_H(T,a) & := \e \left\{\tilde B^\apm_H(\tau(T,a)) - a\tau(T,a)\right\}\\
& = \frac{\ap \mathcal J_H^+(T,a)-\am \mathcal J_H^-(T,a)}{\sqrt{V^\apm_H}} - a\mathcal E_{1/2}(T,a).
\end{align*}
In light of Lemma \ref{lem:J_antisymmetric}, for $T\in(0,\infty)$ we have
\begin{equation}\label{eq:m_formula}
m^\apm_H(T,a) = \frac{\ap-\am }{\sqrt{V^\apm_H}}\cdot \mathcal J_H(T,a) - a\mathcal E_{1/2}(T,a).
\end{equation}
We will now show that Eq.~\eqref{eq:m_formula} holds also in case $T=\infty$, $a>0$. Using Proposition \ref{prop:continuous_modification_pm} and Lemma~\ref{lem:tau_continuity}(ii) we find that
$$\tilde B^\apm_H(\tau(T,a)) - a\tau(T,a) \to \tilde B^\apm_H(\tau(\infty,a)) - a\tau(\infty,a) \ \ \text{a.s.}, \quad T\to\infty.$$
Now we have the bound $\tilde B^\apm_H(\tau(T,a)) - a\tau(T,a) \leq M^\apm_{H}(\infty,a)$, which is integrable, hence we can conclude that
\begin{align*}
m^\apm_H(\infty,a) = \lim_{T\to\infty} m^\apm_H(T,a).
\end{align*}
Furthermore, from Proposition \ref{prop:Jvalue} and Proposition \ref{prop:tau_expectation} it is clear that $\mathcal J_H(T,a) \to \mathcal J_H(\infty,a)$ and $\mathcal E_{1/2}(T,a) \to \mathcal E_{1/2}(\infty,a)$ as $T\to\infty$, respectively. Hence,
\begin{align*}
\lim_{T\to\infty} m^\apm_H(T,a) = \frac{\ap-\am }{\sqrt{V^\apm_H}}\cdot \mathcal J_H(T,a) - a\mathcal E_{1/2}(T,a),
\end{align*}
which concludes the proof that Eq~\eqref{eq:m_formula} holds for all admissible pairs $(T,a)$.

Finally, since $J_H(T,a)>0$ (see Proposition \ref{prop:Jvalue}), it is easy to see that \eqref{eq:m_formula} is maximized whenever $(\ap-\am)/\sqrt{V^\apm_H}$ is maximized. It is a straighforward to show that the maximum is attained at $\apm=(1,-1)$, which concludes the proof.
\end{proof}


\appendix


\section{Special functions}\label{appendix:special_functions}
All of definitions, formulae and relations from this section can be found in \cite{abramowitz1988handbook}.
\subsubsection*{Confluent hypergeometric functions}
For any $a,z\in\R$ and $b\in\R\setminus\{0,-1,-2,\ldots\}$ we define Kummer's (confluent hypergeometric) function
\begin{equation}\label{def:kummer}
\kummer(a,b,z) := \sum_{n=0}^\infty \frac{a^{(n)}z^n}{b^{(n)}n!},
\end{equation}
where $a^{(n)}$ is the rising factorial, i.e.
\begin{align*}
a^{(0)} := 1, \quad \text{and} \quad a^{(n)}:=a(a+1)\cdots(a+n-1) \ \text{for} \ n\in\N.
\end{align*}
Similarly, for any $a,z\in\R$ and $b\in\R\setminus\{0,-1,-2,\ldots\}$ we define Tricomi's (confluent hypergeometric) function, i.e.
\begin{equation}\label{def:tricomi}
U(a,b,z) = \frac{\Gamma(1-b)}{\Gamma(a+1-b)}{}_1F_1(a,b,z) + \frac{\Gamma(b-1)}{\Gamma(a)}{}_1F_1(a+1-b,2-b,z).
\end{equation}
When $b>a>0$, then Kummer's function can be represented as an integral
\begin{equation}
\label{eq:kummer_integral_representation}
\kummer(a,b,z) = \frac{\Gamma(b)}{\Gamma(a)\Gamma(b-a)}\int_0^1 e^{zt}t^{a-1}(1-t)^{b-a-1}{\rm d}t
\end{equation}
and similarly, for $a>0$, $z>0$ Tricomi's function can be represented as an integral
\begin{equation}
\label{eq:tricomi_integral_representation}
U(a,b,z) = \frac{1}{\Gamma(a)}\int_0^\infty e^{-zt}t^{a-1}(1+t)^{b-a-1}{\rm d}t.
\end{equation}
Moreover, the following Kummer's transformations hold
\begin{align}
\label{def:kummer_transformation}\kummer(a,b,z) & = e^{z}\kummer(b-a,b,-z), \\
\label{def:tricomi_transformation}U(a,b,z) & = z^{1-b}U(1+a-b,2-b,z).
\end{align}
and the following recurrence relations hold:
\begin{align}
\label{align:Urecurrence1} z U(a,b+1,z) & = (b-a)U(a,b,z)+U(a-1,b,z) \\
\label{align:Urecurrence2} a U(a+1,b,z) & = U(a,b,z) - U(a,b-1,z).
\end{align}
In this manuscript, we often use the following integral equality. Let $c>\gamma>0$ and $u>0$, then
\begin{equation}\label{key_integral}
\int_0^1 x^{\gamma-1}(1-x)^{c-\gamma-1}\kummer(a,\gamma,ux){\rm d}x = \frac{\Gamma(\gamma)\Gamma(c-\gamma)}{\Gamma(c)}\kummer(a,c,u),
\end{equation}
which can be verified using \cite[7.613-1]{integral_table}.

\subsubsection*{Incomplete Gamma function}
For any $\alpha>0$, $z>0$ we define the \emph{upper} and \emph{lower} incomplete Gamma functions respectively
\begin{equation}\label{def:incomplete_gamma}
\Gamma(\alpha,z) := \int_z^\infty t^{\alpha-1}e^{-t}{\rm d}t, \quad \gamma(\alpha,z) := \int_0^z t^{\alpha-1}e^{-t}{\rm d}t,
\end{equation}
so that $\Gamma(\alpha,z)+\gamma(\alpha,z)=\Gamma(\alpha)$, where $\Gamma(\cdot)$ the standard Gamma function $\Gamma(\alpha):=\int_0^\infty t^{\alpha-1}e^{-t}{\rm d}t$. Using integration by parts we obtain the following useful recurrence relation
\begin{equation}\label{eq:gamma_recurrence}
\gamma(\alpha+1,z) = \alpha\gamma(\alpha,z) - z^\alpha e^{-z}.
\end{equation}
Notice that as $z\to\infty$, the above is reduced to the well-known recurrence relation for the Gamma function, that is $\Gamma(\alpha+1) = \alpha\Gamma(\alpha)$. Finally, we note that $\gamma(\alpha,z)$ can be expressed in terms of the confluent hypergeometric function:
\begin{equation}\label{eq:lower_gamma_kummer}
\gamma(\alpha,z)= \alpha^{-1}z^\alpha e^{-z}\kummer(1,\alpha+1,z).
\end{equation}

\subsubsection*{Error function}
For $z\in\R$ we define the error function and complementary error function respectively:
\begin{equation}\label{def:erf}
\erf(z) := \frac{2}{\sqrt{\pi}} \int_0^z e^{-t^2}{\rm d}t, \quad \erfc(z) := 1-{\rm erf}(z).
\end{equation}
The error function can be expressed in terms of the incomplete Gamma function, and as such, in light of \eqref{eq:lower_gamma_kummer}, also in terms of the hypergeometric function:
\begin{equation}\label{eq:erf_kummer}
\erf(z) = \frac{\sgn(z)}{\sqrt{\pi}}\gamma(\tfrac{1}{2},z^2) = \frac{2ze^{-z^2}}{\sqrt{\pi}}\kummer(1,\tfrac{3}{2},z^2)
\end{equation}


\section{Calculations}\label{appendix:calculations}

\begin{proof}[Derivation of the formula for $\mathcal E_{1/2}(T,a)$ in Proposition \ref{prop:tau_expectation}]
The derivation in cases $T=\infty$, $a>0$ and $T\in(0,\infty)$, $a=0$ is straightforward using Eq.~\eqref{eq:density_infty} and Eq.~\eqref{eq:density_T} respectively. Till this end assume that $a\neq 0$ and $T\in(0,\infty)$. We have
\begin{align*}
\p(\tau(T,a)\in {\rm d}t) = \left(\frac{e^{-a^2t/2}}{\sqrt{\pi t}} - \frac{a}{\sqrt{2}}\erfc\left(\frac{a\sqrt{t}}{\sqrt{2}}\right)\right)\left(\frac{e^{-a^2(T-t)/2}}{\sqrt{\pi (T-t)}} + \frac{a}{\sqrt{2}}\erfc\left(-\frac{a\sqrt{T-t}}{\sqrt{2}}\right)\right),
\end{align*}
see e.g. 2.1.12.4 in \cite{borodin2002handbook}. We now have
\begin{align*}
\mathcal E_{1/2}(T,a) & = \int_0^T t\p(\tau(T,a)\in {\rm d}t) {\rm d}t \\
& = T\int_0^1 t\left(\frac{e^{-u^2t}}{\sqrt{\pi t}} - u\erfc(u\sqrt{t})\right)\left(\frac{e^{-u^2(1-t)}}{\sqrt{\pi(1-t)}} + u\erfc(-u\sqrt{1-t})\right){\rm d}t,
\end{align*}
where we substituted $t = tT$ and put $u := a\sqrt{T/2}$. Now, we have
\begin{equation}\label{eq:final_formula_E12}
\mathcal E_{1/2}(T,a) = T \cdot \left(\frac{e^{-u^2}}{\pi}\cdot J_1 + \frac{u}{\sqrt{\pi}}\cdot\left(J_2(u)-J_3(u)\right) - u^2 J_4(u) \right),
\end{equation}
where
\begin{align*}
& J_1 := \int_0^1 \sqrt{\frac{t}{1-t}}{\rm d} t = \frac{\pi}{2}, \quad J_2(u) := \int_0^1 \sqrt{t}e^{-u^2 t}\erfc(-u\sqrt{1-t}){\rm d}t,\\
& J_3(u) := \int_0^1 \frac{t}{\sqrt{1-t}}e^{-u^2(1-t)}\erfc(u\sqrt{t}){\rm d}t, \quad J_4(u) := \int_0^1 t\erfc(u\sqrt{t})\erfc(-u\sqrt{1-t}){\rm d}t.
\end{align*}
Using the fact that $\erfc(-z) = 2-\erfc(z)$ and applying substitution $t = 1-t$, we obtain
\begin{align*}
J_2(u) & = 2\int_0^1\sqrt{t}e^{-u^2t}{\rm d}t - \int_0^1 \sqrt{1-t}e^{-u^2(1-t)}\erfc(u\sqrt{t}){\rm d}t.
\end{align*}
In the following, let
\begin{align*}
J_5(u) := \int_0^1\sqrt{t}e^{-u^2t}{\rm d}t = \frac{\sqrt{\pi}\erf(u) - 2ue^{-u^2}}{2u^3}, \quad
J_6(u) := \int_0^1\frac{e^{-u^2(1-t)}}{\sqrt{1-t}}\erfc(u\sqrt{t}){\rm d}t,
\end{align*}
where the first integral can be calculated using substitution $t = x^2$. Then
\begin{align*}
J_2(u) & = 2J_5(u) - \int_0^1\frac{1-t}{\sqrt{1-t}}e^{-u^2(1-t)}\erfc(u\sqrt{t}){\rm d}t \\
& = 2J_5(u) + J_3(u) - J_6(u).
\end{align*}
Now, applying substitution $t = x^2/u^2$ and formula \cite[4.3.20]{ng1969table} we obtain
\begin{equation*}
J_6(u) = u^{-1}e^{-u^2}\int_0^u \frac{2x e^{x^2}}{\sqrt{u^2-x^2}}\erfc(u x){\rm d}t = \frac{\sqrt{\pi}}{u}\cdot\left(e^{-u^2}-\erfc(u)\right).
\end{equation*}
Further, using integration by parts we obtain
\begin{align*}
J_3(u) & = \frac{-2ue^{-u^2(1-t)}\sqrt{1-t} + \sqrt{\pi}(1-2u^2)\erf(u\sqrt{1-t})}{2u^3} \cdot \erfc(u\sqrt{t})\bigg|_0^1\\
& \quad + \int_0^1\frac{-2ue^{-u^2(1-t)}\sqrt{1-t} + \sqrt{\pi}(1-2u^2)\erf(u\sqrt{1-t})}{2u^3} \cdot \frac{ue^{-u^2t}}{\sqrt{\pi t}}{\rm d}t.
\end{align*}
In the following let
\begin{align*}
J_7(u) := \int_0^1 \frac{e^{-u^2t}}{\sqrt{t}}{\rm d}t = \frac{\sqrt{\pi}\erf(u)}{u},
\end{align*}
which can be easily calculated using substitution $t = x^2$. Then
\begin{align*}
J_3(u) & = \frac{2ue^{-u^2} - \sqrt{\pi}(1-2u^2)\erf(u)}{2u^3} - \frac{e^{-u^2}}{u\sqrt{\pi}}\cdot J_1 + \left(\tfrac{1}{2u^2}-1\right)(J_7(u)-J_6(u))
\end{align*}
and therefore
\begin{equation}\label{eq:tauexp_J2J3}
\begin{split}
J_3(u) & = \frac{e^{-u^2}}{2u^3}\left(2u - \sqrt{\pi}(1-u^2) - e^{u^2}\sqrt{\pi}(2u^2-1)\erfc(u)\right)\\
J_2(u) & = \frac{e^{-u^2}}{2u^3}\left(-2u - \sqrt{\pi}(1+u^2) + e^{u^2}\sqrt{\pi}(1+\erf(u))\right).
\end{split}
\end{equation}
In the following, let
\begin{align*}
J_8(u) & :=\int_0^1 t\erfc(u\sqrt{t}){\rm d}t = \frac{1}{8}\left(4 -\frac{2e^{-u^2}(3+2u^2)}{\sqrt{\pi}u^3} + \left(\frac{3}{u^4}-4\right)\erf(u)\right),\\
J_9(u) & := \int_0^1 t\erfc(u\sqrt{t})\erfc(-u\sqrt{1-t}){\rm d}t,
\end{align*}
where the first integral was calculated using integration by parts. Using the identity $\erfc(-z)=2-\erfc(z)$ we find that
\begin{align*}
J_4(u) & = 2J_8(u) - J_9(u).
\end{align*}
We will now find the value of the integral $J_9(u)$. See that
After applying substitution $t=1-t$ we obtain
\begin{align*}
2J_9(u) & = \int_0^1 t\erfc(u\sqrt{t})\erfc(u\sqrt{1-t}){\rm d}t +\int_0^1 (1-t)\erfc(u\sqrt{t})\erfc(u\sqrt{1-t}){\rm d}t \\
& = \int_0^1 \erfc(u\sqrt{t})\erfc(u\sqrt{1-t}){\rm d}t.
\end{align*}
After integration by parts we find that
\begin{align*}
2J_9(u) & = \left(\frac{\erf(u\sqrt{t})}{2u^2} + t\erfc(u\sqrt{t}) - \frac{\sqrt{t}e^{-u^2t}}{\sqrt{\pi}u}\right)\cdot \erfc(u\sqrt{1-t})\Bigg|_0^1 \\
& - \int_0^1\left(\frac{\erf(u\sqrt{t})}{2u^2} + t\erfc(u\sqrt{t}) - \frac{\sqrt{t}e^{-u^2t}}{\sqrt{\pi}u}\right) \cdot \frac{ue^{-u^2(1-t)}}{\sqrt{\pi(1-t)}}{\rm d}t,
\end{align*}
therefore
\begin{align*}
2J_9(u) & = \left(\frac{\erf(u)}{2u^2} + \erfc(u) - \frac{e^{-u^2}}{\sqrt{\pi}u}\right)- \frac{1}{2\sqrt{\pi}u}\left(J_7(u) - J_6(u) \right) -\frac{u }{\sqrt{\pi}}\cdot J_3(u) + \frac{e^{-u^2} }{\pi} \cdot J_1.
\end{align*}
After simple algebraic manipulations this yields
\begin{equation}\label{eq:tauexp_J4}
J_4(u) = \frac{3 - e^{-u^2}\left(2u^2+\frac{6u}{\sqrt{\pi}}\right) + (2u^2-3)\erfc(u)}{4u^4}.
\end{equation}
Finally, after plugging in the expressions for $J_2(u)$, $J_3(u)$ and $J_4(u)$ calculated in \eqref{eq:tauexp_J2J3} and \eqref{eq:tauexp_J4} into Eq.~\eqref{eq:final_formula_E12} we obtain
\begin{equation*}
\mathcal E_{1/2}(T,a) = \frac{T}{2}\left(1 +\left(\frac{1}{2u^2}-1\right)\erf(u) - \frac{e^{-u^2}}{\sqrt{\pi}u}\right),
\end{equation*}
which concludes the proof.
\end{proof}

\begin{proof}[Continuation of the proof of Lemma \ref{lem:IH}]
First we show that
\begin{equation}\label{eq:IH_integral}
I_H(t,y) = \frac{t^{H+1/2}}{\sqrt{2\pi}y} \int_0^\infty\int_0^\infty\frac{q^{H-3/2}x^2}{(1+q)^{H+3/2}}\cdot\left[e^{-\tfrac{1}{2}\left(x-y\sqrt{\frac{q(s)}{t}}\right)^2} - e^{-\tfrac{1}{2}\left(x+y\sqrt{\frac{q(s)}{t}}\right)^2}\right]\,{\rm d}x{\rm d}q,
\end{equation}
where $q(s) = s/(1-s)$, which generalizes \cite[Proposition~1]{bisewski2021derivatives} in case $H=\tfrac{1}{2}$. From \eqref{eq:IH_integral} we have
\begin{align*}
I_H(t,y) & = \frac{t^{3/2}}{y} \int_0^t\int_0^\infty\frac{x^2}{s^{3-H}\sqrt{2\pi(t-s)}}e^{-\frac{x^2}{2s}+\frac{y^2}{2t}}\cdot\left[e^{-\frac{(x-y)^2}{2(t-s)}} - e^{-\frac{(x+y)^2}{2(t-s)}}\right]\,{\rm d}x{\rm d}s \\
& = \frac{t^{3/2}}{\sqrt{2\pi}y} \int_0^t\int_0^\infty\frac{x^2}{s^{3-H}\sqrt{t-s}}\cdot\left[e^{-\tfrac{1}{2}\big(\tfrac{x}{\sigma_0}-y\mu_0\big)^2} - e^{-\tfrac{1}{2}\big(\tfrac{x}{\sigma_0}+y\mu_0\big)^2}\right]\,{\rm d}x{\rm d}s,
\end{align*}
where $\mu_0^2 := \frac{s}{(t-s)t}$, and $\sigma_0^2 := \frac{(t-s)s}{t}$. We now apply substitution $x := \sigma_0 x$, then $s := ts$, which gives us
\begin{equation*}
I_H(t,y) = \frac{t^{H+1/2}}{\sqrt{2\pi}y} \int_0^1\int_0^\infty\frac{(1-s)x^2}{s^{3/2-H}}\cdot\left[e^{-\tfrac{1}{2}\left(x-y\sqrt{\frac{q(s)}{t}}\right)^2} - e^{-\tfrac{1}{2}\left(x+y\sqrt{\frac{q(s)}{t}}\right)^2}\right]\,{\rm d}x{\rm d}s,
\end{equation*}
which after substitution $q = s/(1-s)$ yields \eqref{eq:IH_integral}. from \cite[Eq.~(54)]{bisewski2021derivatives} we find that for any $b\in\R$:
\begin{equation*}\label{eq:integral_b}
\int_0^\infty x^2\left(e^{-\frac{(x-b)^2}{2}}-e^{-\frac{(x+b)^2}{2}}\right){\rm d}x = 2be^{-b^2/2} + (1+b^2)\sqrt{2\pi}\cdot \erf(b/\sqrt{2}).
\end{equation*}
Therefore, 
\begin{equation}\label{eq:IH_sum}
I_H(t,y) = \frac{t^{H+1/2}}{\sqrt{\pi}y}(J_1(H,u)+J_2(H,u)+J_3(H,u)),
\end{equation}
where we put $u:=\frac{y}{\sqrt{2t}}$ and
\begin{align*}
J_1(H,u) & := \int_0^\infty\frac{q^{H-3/2}}{(1+q)^{H+3/2}}\cdot 2u\sqrt{q}e^{-uq}{\rm d}q,\\
J_2(H,u) & := \int_0^\infty\frac{q^{H-3/2}}{(1+q)^{H+3/2}}\cdot \sqrt{\pi}\erf(u\sqrt{q}){\rm d}q \\
J_3(H,u) & :=  \int_0^\infty\frac{q^{H-3/2}}{(1+q)^{H+3/2}} \cdot 2u^2q^2\sqrt{\pi}\erf(u\sqrt{q}){\rm d}q
\end{align*}
Using \eqref{eq:tricomi_integral_representation} we find that
\begin{align*}
J_1(H,u) = 2u\Gamma(H)U(H,-\tfrac{1}{2},u^2).
\end{align*}
Integration by parts yields
\begin{align*}
J_2(H,u) = \frac{\sqrt{\pi}(2+4H+4q)q^{H-\tfrac{1}{2}}}{(4H^2-1)(1+q)^{H+1/2}} \cdot \erf(u\sqrt{q})\bigg|_0^\infty -\int_0^\infty \frac{\sqrt{\pi}(2+4H+4q)q^{H-\tfrac{1}{2}}}{(4H^2-1)(1+q)^{H+1/2}}\cdot \frac{ue^{-u^2q}}{\sqrt{\pi q}}{\rm d}q
\end{align*}
and after applying \eqref{eq:tricomi_integral_representation} we find that
\begin{align*}
J_2(H,u) = \frac{\sqrt{\pi}}{H^2-\tfrac{1}{4}} - \frac{u\Gamma(H)U(H,\tfrac{1}{2},u^2)}{H-\tfrac{1}{2}} - \frac{u\Gamma(H)U(1+H,\tfrac{3}{2},u^2)}{H^2-\tfrac{1}{4}}.
\end{align*}
The integral $J_3(H,u)$ is computed analogously to $J_2(H,u)$ and it equals
\begin{align*}
J_3(u,H) = \frac{2\sqrt{\pi}u^2}{H+\tfrac{1}{2}} - \frac{2u^3\Gamma(1+H)U(1+H,\tfrac{3}{2},u^2)}{H+\tfrac{1}{2}}.
\end{align*}
Now, after simple algebraic manipulations we obtain
\begin{align*}
J_1(H,u)+J_2(H,u)+J_3(H,u) = \frac{\sqrt{\pi}}{H^2-\tfrac{1}{4}}\left(1 - \frac{\Gamma(H)}{\sqrt{\pi}}J_4(H,u)\right) + \frac{2\sqrt{\pi}u^2}{H+\tfrac{1}{2}},
\end{align*}
where
\begin{align*}
J_4(H,u) & = -2u(H^2-\tfrac{1}{4})U(H,-\tfrac{1}{2},u^2) + u(H+\tfrac{1}{2})U(H,\tfrac{1}{2},u^2) \\
& \quad + uHU(H+1,\tfrac{3}{2},u^2) + 2u^3(H-\tfrac{1}{2})HU(H+1,\tfrac{3}{2},u^2).
\end{align*}
Applying the relation \eqref{align:Urecurrence1} to the last term above we obtain
\begin{align*}
J_4(H,u) & = -2u(H^2-\tfrac{1}{4})\Big(U(H,-\tfrac{1}{2},u^2) + HU(H+1,\tfrac{1}{2},u^2\Big) + u(H+\tfrac{1}{2})U(H,\tfrac{1}{2},u^2)\\
& \quad + uHU(H+1,\tfrac{3}{2},u^2) +  2u(H-\tfrac{1}{2})HU(H,\tfrac{1}{2},u^2).
\end{align*}
Applying the relation \eqref{align:Urecurrence2} to the first and third terms above we obtain
\begin{align*}
J_4(H,u) & = uU(H,\tfrac{3}{2},u^2) = U(H-\tfrac{1}{2},\tfrac{1}{2},u^2),
\end{align*}
where in the last equality we applied the Kummer's transformation \eqref{def:tricomi_transformation}. This concludes the proof.
\end{proof}

\begin{lemma}\label{lemma:U_integral}
If $H\in(0,\tfrac{1}{2})\cup(\tfrac{1}{2},1)$ and $u\in\R$, then
\begin{align*}
& \int_0^\infty U(H-\tfrac{1}{2},\tfrac{1}{2},z^2)e^{-(z+u)^2}{\rm d}z \\
& \qquad= \frac{\sqrt{\pi}e^{-u^2}}{2\Gamma(H+\tfrac{1}{2})} + \frac{\sqrt{\pi}|u|^{1-2H}}{2}\left(\frac{\gamma(H+\tfrac{1}{2},u^2)}{\Gamma(H+\tfrac{1}{2})}-\sgn(u)\frac{\gamma(H,u^2)}{\Gamma(H)}\right).
\end{align*}
\end{lemma}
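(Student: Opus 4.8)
The plan is to establish the identity first on the subinterval $H\in(\tfrac12,1)$, where the first parameter $a=H-\tfrac12$ of the Tricomi function is positive and its integral representation \eqref{eq:tricomi_integral_representation} is available, and then to extend it to $H\in(0,\tfrac12)$ by analytic continuation in $H$. So fix $H\in(\tfrac12,1)$. Inserting \eqref{eq:tricomi_integral_representation} for $U(H-\tfrac12,\tfrac12,z^2)$ and interchanging the order of integration (justified by absolute convergence, since the integrand decays like a Gaussian in $z$), the inner integral over $z$ is elementary; completing the square produces a complementary error function, and after the substitution $s=t/(1+t)$ all powers of $(1+t)$ cancel, leaving
\begin{equation*}
\int_0^\infty U(H-\tfrac12,\tfrac12,z^2)e^{-(z+u)^2}{\rm d}z = \frac{\sqrt{\pi}}{2\Gamma(H-\tfrac12)}\int_0^1 s^{H-\tfrac32}e^{-u^2 s}\,\erfc\!\big(u\sqrt{1-s}\big)\,{\rm d}s .
\end{equation*}
This reduction holds for every real $u$, with $\erfc$ simply taking the appropriate value when $u<0$.

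Next I would split $\erfc=1-\erf$. The constant part is elementary: the substitution $r=u^2s$ gives $\int_0^1 s^{H-3/2}e^{-u^2s}{\rm d}s=|u|^{1-2H}\gamma(H-\tfrac12,u^2)$. The genuine work, and what I expect to be the main obstacle, is the remaining integral
\begin{equation*}
L(u):=\int_0^1 s^{H-\tfrac32}e^{-u^2 s}\,\erf\!\big(u\sqrt{1-s}\big)\,{\rm d}s .
\end{equation*}
To evaluate $L(u)$ for $u>0$ I would write $\erf(u\sqrt{1-s})=\tfrac{2}{\sqrt{\pi}}u\sqrt{1-s}\int_0^1 e^{-u^2(1-s)\rho^2}{\rm d}\rho$, apply Fubini, and recognise the inner $s$-integral $\int_0^1 s^{H-3/2}(1-s)^{1/2}e^{-u^2(1-\rho^2)s}{\rm d}s$ as a Kummer function through \eqref{eq:kummer_integral_representation}. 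The Kummer transformation \eqref{def:kummer_transformation} then cancels the two Gaussian factors and reduces $L(u)$ to $\tfrac{u\Gamma(H-1/2)}{\Gamma(H+1)}e^{-u^2}\int_0^1 \kummer(\tfrac32,H+1,u^2(1-\rho^2)){\rm d}\rho$; expanding $\kummer$ in its power series and integrating term by term against the Beta kernel $(1-\rho^2)$ collapses the $\rho$-integral to $\kummer(1,H+1,u^2)$. Finally, the representation \eqref{eq:lower_gamma_kummer} of the lower incomplete Gamma function turns this into $\gamma(H,u^2)$, yielding $L(u)=\tfrac{\Gamma(H-1/2)}{\Gamma(H)}u^{1-2H}\gamma(H,u^2)$; the oddness of $L$ in $u$ then supplies the factor $\sgn(u)$ for $u<0$. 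As a cross-check, $L$ can alternatively be pinned down as the unique solution, vanishing at the origin, of the first-order linear ODE $uL'(u)+(2H-1)L(u)=\tfrac{2\Gamma(H-1/2)}{\Gamma(H)}ue^{-u^2}$ obtained from one differentiation and one integration by parts.

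Combining the two pieces and simplifying with the Gamma recurrence \eqref{eq:gamma_recurrence}, used in the form $(H-\tfrac12)\gamma(H-\tfrac12,u^2)=\gamma(H+\tfrac12,u^2)+|u|^{2H-1}e^{-u^2}$, together with $\Gamma(H+\tfrac12)=(H-\tfrac12)\Gamma(H-\tfrac12)$, produces exactly the claimed right-hand side. It then remains to pass from $(\tfrac12,1)$ to $(0,\tfrac12)$. For fixed $u\neq0$ both sides are analytic in $H$ on a complex neighbourhood of $(0,1)$ — the right-hand side manifestly (since $|u|^{1-2H}$, $\gamma(\cdot,u^2)$ and $1/\Gamma$ are entire in the parameter), and the left-hand side because $U(H-\tfrac12,\tfrac12,z^2)$ is analytic in $H$ through its $1/\Gamma$ expansion \eqref{def:tricomi} while the $z$-integral converges locally uniformly, so analyticity follows from Morera's theorem. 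Since the two sides agree on $(\tfrac12,1)$, which has accumulation points, they agree throughout by the identity theorem, giving the result for all $H\in(0,1)\setminus\{\tfrac12\}$. The steps demanding the most care are the chain of special-function manipulations evaluating $L(u)$, in particular the term-by-term resummation to $\kummer(1,H+1,u^2)$, and the uniform-convergence/analyticity argument underpinning the continuation; the Fubini interchanges are routine given the Gaussian decay.
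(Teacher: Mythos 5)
Your proposal is correct, and I verified its two nontrivial computations: the reduction to $\frac{\sqrt{\pi}}{2\Gamma(H-\tfrac12)}\int_0^1 s^{H-3/2}e^{-u^2s}\erfc(u\sqrt{1-s})\,{\rm d}s$ (the powers of $(1+t)$ do cancel under $s=t/(1+t)$), and the evaluation $L(u)=\tfrac{\Gamma(H-1/2)}{\Gamma(H)}u^{1-2H}\gamma(H,u^2)$ — the term-by-term step works because $\int_0^1(1-\rho^2)^n{\rm d}\rho = n!/(\tfrac32)^{(n)}$, which collapses the series to $\kummer(1,H+1,u^2)$, and your ODE cross-check is consistent. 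Your intermediate answer $\frac{\sqrt{\pi}}{2}|u|^{1-2H}\bigl(\frac{\gamma(H-1/2,u^2)}{\Gamma(H-1/2)}-\sgn(u)\frac{\gamma(H,u^2)}{\Gamma(H)}\bigr)$ converts to the stated form via \eqref{eq:gamma_recurrence} exactly as you say. However, your route is genuinely different from the paper's, which sidesteps your main structural complication entirely: the paper first applies the transformation \eqref{def:tricomi_transformation} to write $U(H-\tfrac12,\tfrac12,z^2)=zU(H,\tfrac32,z^2)$, shifting the first parameter to $H>0$ so that the integral representation \eqref{eq:tricomi_integral_representation} is valid for \emph{all} $H\in(0,1)$ at once — no restriction to $(\tfrac12,1)$ and no analytic continuation. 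The paper's inner integral is then $\int_0^\infty ze^{-z^2(1+t)-2zu}{\rm d}z$, and after the substitution $s=1/(1+t)$ it splits the result into three integrals handled by the Beta function, the Kummer representation \eqref{eq:kummer_integral_representation}, and the identity \eqref{key_integral} combined with \eqref{eq:erf_kummer}, before converting to incomplete Gamma functions via \eqref{eq:lower_gamma_kummer}. What each approach buys: the paper's one-line transformation makes the argument uniform in $H$ and purely computational, while yours keeps the Tricomi function untransformed at the cost of an extra continuation layer (Morera plus the identity theorem, with the attendant local-uniformity checks, which are standard but must be stated carefully since the representation \eqref{def:tricomi} carries the factor $z^{1-b}$ governing behaviour at $z=0$); in exchange, your evaluation of $L(u)$ via resummation is a self-contained and arguably more transparent piece of special-function work than the paper's three-way split.
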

\begin{proof}
Using \eqref{def:kummer_transformation} we find that $U(H-\tfrac{1}{2},\tfrac{1}{2},z^2) = zU(H,\tfrac{3}{2},z^2)$ therefore, using \eqref{eq:tricomi_integral_representation} we have
\begin{align*}
A(u,H) & := \int_0^\infty U(H-\tfrac{1}{2},\tfrac{1}{2},z^2)e^{-(z+u)^2}{\rm d}z\\
 & =\frac{1}{\Gamma(H)}\int_0^\infty t^{H-1}(1+t)^{1/2-H}\int_0^\infty ze^{-(z+u)^2}e^{-z^2t}{\rm d}z{\rm d}t \\
& = \frac{e^{-u^2}}{\Gamma(H)}\int_0^\infty t^{H-1}(1+t)^{1/2-H}\int_0^\infty ze^{-z^2(1+t)-2zu}{\rm d}z{\rm d}t\\
& = \frac{e^{-u^2}}{\Gamma(H)}\int_0^\infty t^{H-1}(1+t)^{1/2-H} \cdot \frac{\sqrt{1+t} - \sqrt{\pi}ue^{u^2/(1+t)}\erfc(u/\sqrt{1+t})}{2(1+t)^{3/2}}{\rm d}t
\end{align*}
After substitution $s = \frac{1}{1+t}$ we obtain
\begin{align*}
A(u,H)  =\frac{e^{-u^2}}{2\Gamma(H)} \left(J_1(u,H) - J_2(u,H) + J_3(u,H)\right),
\end{align*}
where
\begin{align*}
J_1(u,H) & := \int_0^1 s^{-1/2}(1-s)^{H-1}{\rm d}s,\\
J_2(u,H) & := \sqrt{\pi}u\int_0^1 (1-s)^{H-1}e^{u^2s}{\rm d}s,\\
J_3(u,H) &:= \sqrt{\pi}u\int_0^1 (1-s)^{H-1}e^{u^2s}\erf(u\sqrt{z}){\rm d}s.
\end{align*}
Now, $J_1(u,H)$ can be easily found from the definition of the Beta function, while $J_2(u,H)$ can be found using \eqref{eq:kummer_integral_representation}, hence
\begin{align*}
J_1(u,H) = \frac{\sqrt{\pi}\Gamma(H)}{\Gamma(H+\tfrac{1}{2})}, \quad J_2(u,H) = \frac{\sqrt{\pi}u\Gamma(H)}{\Gamma(H+1)}\kummer(1,H+1,u^2).
\end{align*}
The integral $J_3(u,H)$ is now computed using the error function representation from \eqref{eq:erf_kummer} and the result in \eqref{key_integral}, i.e.
\begin{align*}
J_3(u,H) & = 2u^2\int_0^1 s^{1/2}(1-s)^{H-1}\kummer(\tfrac{1}{2},\tfrac{3}{2},u^2s){\rm d}s \\
& = 2u^2 \cdot \frac{\Gamma(\tfrac{3}{2})\Gamma(H)}{\Gamma(H+\tfrac{3}{2})}\kummer(1,H+\tfrac{3}{2},u^2).
\end{align*}
Finally, using \eqref{eq:lower_gamma_kummer} we obtain
\begin{align*}
A(u,H) & = \frac{\sqrt{\pi}}{2} \left(\frac{e^{-u^2}}{\Gamma(H+\tfrac{1}{2})} - \frac{ue^{-u^2}}{\Gamma(H+1)}\kummer(1,H+1,u^2) + \frac{u^2e^{-u^2}}{\Gamma(H+\tfrac{3}{2})}\kummer(1,H+\tfrac{3}{2},u^2)\right) \\
& = \frac{\sqrt{\pi}}{2} \left(\frac{e^{-u^2}}{\Gamma(H+\tfrac{1}{2})} - \frac{u|u|^{-2H}\gamma(H,u^2)}{\Gamma(H)} + \frac{|u|^{-2H+1}\gamma(H+\tfrac{1}{2},u^2)}{\Gamma(H+\tfrac{1}{2})}\right),
\end{align*}
which ends the proof.
\end{proof}

\begin{proof}[Proof of Eq.~\eqref{eq:proof_calculation_Jvalue}] Recall that $H\in(0,\tfrac{1}{2})\cup(\tfrac{1}{2},1)$, $a\neq0$ and $T\in(0,\infty)$. We will first show that
\begin{equation}\label{eq:J1_zero}
\mathcal J_H^{(1)}(T,a) := \e \left(\tau^{H-\tfrac{1}{2}}\left(Y(\tau) + a\tau - \frac{\tau}{Y(\tau)}\right)\right) = 0.
\end{equation}
The joint density $p(t,y;a,T)$ of the pair $(\tau,Y(\tau))$ is well-known, see \eqref{eq:density_T} in Section \ref{s:joint_density}. We have $p(t,y;T,a) = p_1(t,y;T,a)p_2(t;T,a)$, where
\begin{align*}
& p_1(t,y;T,a) := yt^{-3/2}\exp\left\{-\tfrac{(y+ta)^2}{2t}\right\}\\
& p_2(t;T,a) := \frac{1}{\pi\sqrt{T-t}} \cdot\Big(e^{-a^2(T-t)/2} + a\sqrt{\tfrac{\pi(T-t)}{2}}~\erfc\Big(-a\sqrt{\tfrac{T-t}{2}}\Big)\Big),
\end{align*}
hence
\begin{align*}
\mathcal J_H^{(1)}(T,a) & = \int_0^t\int_0^\infty t^{H-1/2}(y+a t- \tfrac{t}{y})p(t,y;a,T){\rm d}y{\rm d}t \\
& = \int_0^t t^{H-2}p_2(t;T,a)\int_0^\infty(y^2+aty-t)\exp\left\{-\frac{(y+at)^2}{2t}\right\}{\rm d}y{\rm d}t \\
& = \int_0^t t^{H-1/2}p_2(t;T,a)\int_{a\sqrt{t}}^\infty(z^2-a\sqrt{t}z-1)\exp\left\{-\frac{z^2}{2}\right\}{\rm d}z{\rm d}t,
\end{align*}
where in the last line we substituted $z := \frac{y+at}{\sqrt{t}}$. It can be seen that the definite integral with respect to $z$ equals to $0$ because for every $u\in\R$ we have
\begin{align*}
\int_u^\infty z^2 e^{-\tfrac{z^2}{2}}{\rm d}z = ue^{-\tfrac{u^2}{2}} + \sqrt{\tfrac{\pi}{2}}\erfc\big(\tfrac{u}{\sqrt{2}}\big), \quad \int_u^\infty e^{-\tfrac{z^2}{2}}{\rm d}z = ue^{-\tfrac{u^2}{2}}, \int_u^\infty z^2 e^{-\tfrac{z^2}{2}}{\rm d}z = \sqrt{\tfrac{\pi}{2}}\erfc\big(\tfrac{u}{\sqrt{2}}\big).
\end{align*}
We have established that Eq.~\eqref{eq:J1_zero} holds and therefore it is left to calculate the integral
\begin{align*}
\mathcal J^{(2)}_H(T,a) = \frac{\Gamma(H)}{(H+\tfrac{1}{2})\sqrt{\pi}}\int_0^t t^{H-1}p_2(t;T,a)\int_0^\infty U\left(H-\tfrac{1}{2},\tfrac{1}{2},\tfrac{y^2}{2t}\right)\exp\left\{-\frac{(y+at)^2}{2t}\right\}{\rm d}y{\rm d}t.
\end{align*}
Consider the innermost integral. After substituting $y=z\sqrt{2t}$ we find that
\begin{align*}
\int_0^\infty U\left(H-\tfrac{1}{2},\tfrac{1}{2},\tfrac{y^2}{2t}\right)e^{-\frac{(y+at)^2}{2t}}{\rm d}y & = \sqrt{2t}\int_0^\infty U\left(H-\tfrac{1}{2},\tfrac{1}{2},z^2\right)e^{-(z+a\sqrt{t/2})^2}{\rm d}z.
\end{align*}
Using Lemma \ref{lemma:U_integral} we obtain
\begin{equation}\label{eq:final_JH_sum}
\mathcal J^{(2)}_H(T,a) = \frac{2^H|a|^{-2H}\Gamma(H)}{\sqrt{2\pi}(H+\tfrac{1}{2})}\bigg(J_1(T,a) + J_2(T,a) - J_3(T,a)\bigg),
\end{equation}
where
\begin{align*}
J_1(T,a) &:= \frac{\sqrt{\pi}2^{-H}|a|^{2H}}{\Gamma(H+\tfrac{1}{2})}\int_0^T p_2(t;T,a)t^{H-1/2}e^{-\tfrac{a^2t}{2}}{\rm d}t\\
J_2(T,a) &:= \frac{\sqrt{\pi}|a|}{\sqrt{2}\Gamma(H+\tfrac{1}{2})}\int_0^T p_2(t;T,a)\gamma(H+\tfrac{1}{2},\tfrac{a^2t}{2}){\rm d}t\\
J_3(T,a) &:= \frac{\sqrt{\pi}a}{\sqrt{2}\Gamma(H)}\int_0^T p_2(t;T,a)\gamma(H,\tfrac{a^2t}{2}){\rm d}t .
\end{align*}
It is left to calculate each of the integrals above. Using substitution $t=tT$ and putting $u:=a\sqrt{T/2}$ we can find that
\begin{align*}
J_1(T,a) & = \frac{|u|^{2H}}{u\Gamma(H+\tfrac{1}{2})}\int_0^1 \left(\frac{u e^{-u^2(1-t)}}{\sqrt{\pi(1-t)}} + u^2\erfc(-u\sqrt{1-t})\right)t^{H-1/2}e^{-u^2t}{\rm d}t \\
J_2(T,a) & = \frac{\sgn(u)}{\Gamma(H+\tfrac{1}{2})}\int_0^1 \left(\frac{ue^{-u^2(1-t)}}{\sqrt{\pi(1-t)}} + u^2\erfc\Big(-u\sqrt{1-t}\Big)\right)\gamma(H+\tfrac{1}{2},u^2t){\rm d}t \\
J_3(T,a) & = \frac{1}{\Gamma(H)}\int_0^1 \left(\frac{ue^{-u^2(1-t)}}{\sqrt{\pi(1-t)}} + u^2\erfc\Big(-u\sqrt{1-t}\Big)\right)\gamma(H,u^2t){\rm d}t
\end{align*}
Let us define:
\begin{align*}
f(t;u):=-\erfc(-u\sqrt{1-t}), \qquad g(t;u,H):=-|u|^{-2H-1}\Gamma(H+\tfrac{1}{2},u^2t).
\end{align*}
Slightly abusing notation, for breviety we write $f(t):=f(t;u)$ and $g(t):=g(t;u,H)$. We then have
\begin{align*}
f'(t) = \frac{ue^{-u^2(1-t)}}{\sqrt{\pi(1-t)}}, \qquad g'(t) = t^{H-1/2}e^{-u^2t}
\end{align*}
and the quantities $J_i(T,a)$, $i\in\{1,2,3\}$ can be expressed as
\begin{align*}
J_1(T,a) & = \frac{|u|^{2H}}{u\Gamma(H+\tfrac{1}{2})}\int_0^1 \Big(f'(t) - u^2f(t)\Big)g'(t){\rm d}t \\
J_2(T,a) & = \frac{1}{\Gamma(H+\tfrac{1}{2})}\int_0^1 \Big(f'(t) - u^2f(t)\Big)\gamma(H+\tfrac{1}{2},u^2t){\rm d}t \\
J_3(T,a) & = \frac{1}{\Gamma(H)}\int_0^1 \Big(f'(t) - u^2f(t)\Big)\gamma(H,u^2t){\rm d}t.
\end{align*}
Before we calculate the values of $J_i(T,a)$ we introduce two useful functions, for $s>0, u\in\R$:
\begin{align*}
h_1(u,s) & := \int_0^1 f'(t)\gamma(s,u^2t){\rm d}t = \sgn(u)\cdot\frac{\Gamma(s)\gamma(s+\tfrac{1}{2},u^2)}{\Gamma(s+\tfrac{1}{2})}, \\
h_2(u,s) & := \int_0^1 (1-t)f'(t)\gamma(s,u^2t){\rm d}t = \sgn(u)\cdot\frac{\Gamma(s)\gamma(s+\tfrac{3}{2},u^2)}{2u^2\Gamma(s+\tfrac{3}{2})}.
\end{align*}
The values of these functions were found by applying relation \eqref{eq:lower_gamma_kummer} and finding the value of the integral using \eqref{key_integral}. Now, integration by parts yields
\begin{align*}
J_1(T,a) & = \frac{|u|^{2H}e^{-u^2}}{\Gamma(H+1)} + \frac{u|u|^{2H}}{\Gamma(H+\tfrac{1}{2})} \cdot \left(f(t)g(t)\bigg|_0^1 - \int_0^1 f'(t)g(t){\rm d}t\right) \\
& = \frac{|u|^{2H}e^{-u^2}}{\Gamma(H+1)} + \frac{\sgn(u)}{\Gamma(H+\tfrac{1}{2})}\Big(\gamma(H+\tfrac{1}{2},u^2) + h_1(u,H+\tfrac{1}{2})\Big) \\
& = \frac{\gamma(H,u^2)}{\Gamma(H)}-\frac{\gamma(H+1,u^2)}{\Gamma(H+1)} + \frac{\sgn(u)}{\Gamma(H+\tfrac{1}{2})}\Big(\gamma(H+\tfrac{1}{2},u^2) + h_1(u,H+\tfrac{1}{2})\Big),
\end{align*}
where in the last line we used the recurrence relation for the incomplete Gamma function in Eq.~\eqref{eq:gamma_recurrence}.
Now, notice that for any $s>0,u\in\R$:
\begin{align*}
\int\gamma(s,u^2t){\rm d}t = t\gamma(s,u^2t)+u^{-2}\Gamma(s+1,u^2t) + C.
\end{align*}
Therefore, integration by parts yields
\begin{align*}
\int_0^1f(t)\gamma(s,u^2t){\rm d}t & = -\gamma(s,u^2)- \int_0^1 tf'(t)\gamma(s,u^2t)+u^{-2}\int_0^1f'(t)\gamma(s+1,u^2t)){\rm d}t \\
& = -\gamma(s,u^2)+ h_2(u,s)-h_1(u,s)+u^{-2}h_1(u,s+1).
\end{align*}
Finally, this gives us
\begin{align*}
J_2(T,a) = \frac{\sgn(u)}{\Gamma(H+\tfrac{1}{2})} & \bigg((1+u^2)h_1(u,H+\tfrac{1}{2})+u^2\gamma(H+\tfrac{1}{2},u^2)-u^2h_2(u,H+\tfrac{1}{2})\\
& \quad -\gamma(H+\tfrac{3}{2})-h_1(u,H+\tfrac{3}{2})\bigg).
\end{align*}
Using analogous methods, we find that
\begin{align*}
J_3(T,a) & = \frac{1}{\Gamma(H)} \bigg((1+u^2)h_1(u,H)+u^2\gamma(H,u^2)-u^2h_2(u,H)-\gamma(H+1,u^2)-h_1(u,H+1)\bigg).
\end{align*}
Through straightforward algebraic manipulations and applying simple recurrence relation \eqref{eq:gamma_recurrence}, we finally obtain
\begin{align*}
J_1(T,a)+J_2(T,a)-J_3(T,a) = \frac{\gamma(H,u^2)}{\Gamma(H)},
\end{align*}
which concludes the proof.
\end{proof}


\section*{Acknowledgements}

The author would like to thank Prof. Krzysztof D\c{e}bicki and Prof. Tomasz Rolski for helpful discussions. Krzysztof Bisewski's research was funded by SNSF Grant 200021-196888.

\medskip
\bibliographystyle{plain}
\bibliography{refs}{}

\begin{thebibliography}{10}

\bibitem{abramowitz1988handbook}
Milton Abramowitz, Irene~A Stegun, and Robert~H Romer.
\newblock Handbook of mathematical functions with formulas, graphs, and
  mathematical tables, 1988.

\bibitem{asmussen1995discretization}
S{\o}ren Asmussen, Peter Glynn, and Jim Pitman.
\newblock Discretization error in simulation of one-dimensional reflecting
  {B}rownian motion.
\newblock {\em Ann. Appl. Probab.}, 5(4):875--896, 1995.

\bibitem{benassi1997elliptic}
Albert Benassi, Daniel Roux, and St{\'e}phane Jaffard.
\newblock Elliptic gaussian random processes.
\newblock {\em Revista matem{\'a}tica iberoamericana}, 13(1):19--90, 1997.

\bibitem{BDM21}
Krzysztof Bisewski, Krzysztof D\c{e}bicki, and Michel Mandjes.
\newblock Bounds for expected supremum of fractional {B}rownian motion with
  drift.
\newblock {\em J. Appl. Probab.}, 58(2):411--427, 2021.

\bibitem{bisewski2021derivatives}
Krzysztof Bisewski, Krzysztof D{\c{e}}bicki, and Tomasz Rolski.
\newblock Derivatives of sup-functionals fo fractional brownian motion
  evaluated at {H}=1/2.
\newblock {\em arXiv preprint arXiv:2110.08788}, 2021.

\bibitem{bisewski2020zooming}
Krzysztof Bisewski and Jevgenijs Ivanovs.
\newblock Zooming-in on a l{\'e}vy process: failure to observe threshold
  exceedance over a dense grid.
\newblock {\em Electronic Journal of Probability}, 25:1--33, 2020.

\bibitem{borodin2002handbook}
Andrei~N. Borodin and Paavo Salminen.
\newblock {\em Handbook of {B}rownian motion---facts and formulae}.
\newblock Probability and its Applications. Birkh\"{a}user Verlag, Basel,
  second edition, 2002.

\bibitem{borovkov2017bounds}
Konstantin Borovkov, Yuliya Mishura, Alexander Novikov, and Mikhail Zhitlukhin.
\newblock Bounds for expected maxima of {G}aussian processes and their discrete
  approximations.
\newblock {\em Stochastics}, 89(1):21--37, 2017.

\bibitem{borovkov2018new}
Konstantin Borovkov, Yuliya Mishura, Alexander Novikov, and Mikhail Zhitlukhin.
\newblock New and refined bounds for expected maxima of fractional {B}rownian
  motion.
\newblock {\em Statist. Probab. Lett.}, 137:142--147, 2018.

\bibitem{davies1987tests}
Robert~B Davies and DS~Harte.
\newblock Tests for hurst effect.
\newblock {\em Biometrika}, 74(1):95--101, 1987.

\bibitem{dieker2004simulation}
Ton Dieker.
\newblock Simulation of fractional brownian motion.
\newblock Master's thesis, Department of Mathematical Sciences, University of
  Twente, 2004.

\bibitem{integral_table}
Izrail~S. Gradshteyn and Iosif~M. Ryzhik.
\newblock Table of integrals, series, and products.
\newblock Elsevier/Academic Press, Amsterdam, eighth edition, 2015.
\newblock Translated from the Russian, Translation edited and with a preface by
  Daniel Zwillinger and Victor Moll, Revised from the seventh edition
  [MR2360010].

\bibitem{janson2007brownian}
Svante Janson.
\newblock Brownian excursion area, wright’s constants in graph enumeration,
  and other brownian areas.
\newblock {\em Probability Surveys}, 4:80--145, 2007.

\bibitem{kordzakhia2018limit}
Nino~E Kordzakhia, Yury~A Kutoyants, Alexander~A Novikov, and Lin-Yee Hin.
\newblock On limit distributions of estimators in irregular statistical models
  and a new representation of fractional brownian motion.
\newblock {\em Statistics \& Probability Letters}, 139:141--151, 2018.

\bibitem{kroese2015spatial}
Dirk~P Kroese and Zdravko~I Botev.
\newblock Spatial process simulation.
\newblock In {\em Stochastic geometry, spatial statistics and random fields},
  pages 369--404. Springer, 2015.

\bibitem{MR3519723}
Vitalii Makogin.
\newblock Simulation paradoxes related to a fractional {B}rownian motion with
  small {H}urst index.
\newblock {\em Mod. Stoch. Theory Appl.}, 3(2):181--190, 2016.

\bibitem{MR4018824}
Artagan Malsagov and Michel Mandjes.
\newblock Approximations for reflected fractional {B}rownian motion.
\newblock {\em Phys. Rev. E}, 100(3):032120, 7, 2019.

\bibitem{mandelbrot1968fractional}
Benoit~B. Mandelbrot and John~W. Van~Ness.
\newblock Fractional {B}rownian motions, fractional noises and applications.
\newblock {\em SIAM Rev.}, 10:422--437, 1968.

\bibitem{ng1969table}
Edward~W. Ng and Murray Geller.
\newblock A table of integrals of the error functions.
\newblock {\em J. Res. Nat. Bur. Standards Sect. B}, 73B:1--20, 1969.

\bibitem{peltier1995multifractional}
Romain-Fran{\c{c}}ois Peltier and Jacques~L{\'e}vy V{\'e}hel.
\newblock {\em Multifractional Brownian motion: definition and preliminary
  results}.
\newblock PhD thesis, INRIA, 1995.

\bibitem{samorodnitsky1994stable}
Gennady Samorodnitsky and Murad~S. Taqqu.
\newblock {\em Stable non-{G}aussian random processes}.
\newblock Stochastic Modeling. Chapman \& Hall, New York, 1994.
\newblock Stochastic models with infinite variance.

\bibitem{Seijo2011A}
Emilio Seijo and Bodhisattva Sen.
\newblock A continuous mapping theorem for the smallest argmax functional.
\newblock {\em Electron. J. Stat.}, 5:421--439, 2011.

\bibitem{Sha96}
Qi-Man Shao.
\newblock Bounds and estimators of a basic constant in extreme value theory of
  {G}aussian processes.
\newblock {\em Statist. Sinica}, 6(1):245--257, 1996.

\bibitem{shepp1979joint}
Lawrence~A. Shepp.
\newblock The joint density of the maximum and its location for a {W}iener
  process with drift.
\newblock {\em J. Appl. Probab.}, 16(2):423--427, 1979.

\bibitem{StoevTaqqu06}
Stilian~A. Stoev and Murad~S. Taqqu.
\newblock How rich is the class of multifractional {B}rownian motions?
\newblock {\em Stochastic Process. Appl.}, 116(2):200--221, 2006.

\bibitem{vardar2015bounds}
Ceren Vardar-Acar and Hatice Bulut.
\newblock Bounds on the expected value of maximum loss of fractional brownian
  motion.
\newblock {\em Statistics \& Probability Letters}, 104:117--122, 2015.

\end{thebibliography}
\end{document}